\documentclass[12pt,reqno]{amsart}
\advance \topmargin by -\headheight
\advance \topmargin by -\headsep
\evensidemargin \oddsidemargin
\usepackage{amsmath}
\usepackage{bbm}
\usepackage{amsfonts}
\usepackage{stmaryrd}
\usepackage{amssymb}
\usepackage{amsthm}
\usepackage{csquotes}
\usepackage{color}
\usepackage{mathrsfs}
\usepackage{dsfont}
\usepackage{bm}
\usepackage{cite}
\usepackage{enumerate}
\usepackage{booktabs}

\numberwithin{equation}{section}

\newtheorem{theorem}{Theorem}[section]
\newtheorem{corollary}[theorem]{Corollary}
\newtheorem{lemma}[theorem]{Lemma}
\newtheorem{proposition}[theorem]{Proposition}
\newtheorem{fact}[theorem]{Fact}

\theoremstyle{definition}
\newtheorem{definition}[theorem]{Definition}
\newtheorem{example}[theorem]{Example}
\theoremstyle{remark}

\newcommand{\R}{\mathbb R}

\newcommand{\Z}{\mathbb Z}
\newcommand{\Per}{\mathrm{Per}}
\def\d{\,\mathrm{d}}

\newcommand{\BM}{\mathrm{BM}}
\newcommand{\M}{\mathcal{M}}
\newcommand{\ndim}{\mathrm n}
\newcommand{\hdim}{\mathrm h}

\newcommand{\vol}{\operatorname{vol}}

\newcommand{\PSL}{\operatorname{PSL}}

\usepackage{mathtools}

\title{A nonabelian Brunn--Minkowski inequality II}
\author{Yifan Jing}
\address{Department of Mathematics, The Ohio State University, Columbus OH, 43210, USA}
\email{jing.245@osu.edu}

\author{Chieu-Minh Tran}
\address{Department of Mathematics, National University of Singapore, 119077, Singapore}
\email{trancm@nus.edu.sg}
\subjclass[2020]{22D05, 22E15, 52A40, 49Q20}

\begin{document}
\begin{abstract}
We prove that every unimodular locally compact group $G$ of noncompact Lie dimension $n$ satisfies the sharp Brunn--Minkowski inequality
\[
  \mu_G(XY)^{1/n}\ge\mu_G(X)^{1/n}+\mu_G(Y)^{1/n},
\]
and establish a general form for arbitrary, possibly nonunimodular, locally compact groups. This fully confirms the nonabelian Brunn--Minkowski conjecture proposed by the present authors and Zhang.
As an application, we obtain an isoperimetric inequality on symmetric spaces of noncompact type. 
\end{abstract}
\maketitle

\section{Introduction}\label{sec:introduction}

Let $G$ be a locally compact group, let $\mu_G$ be a left Haar measure, and let $\nu_G$ be a right Haar measure.  For subsets $X,Y\subseteq G$ we write $XY=\{xy:x\in X,\ y\in Y\}$ for their product set, using the additive notation $X+Y$ when $G$ is abelian.  When $G$ is the Euclidean space $\R^d$, the Brunn--Minkowski inequality states that
\[  
\lambda(X+Y)^{1/d}\ge\lambda(X)^{1/d}+\lambda(Y)^{1/d}
\]
for all nonempty compact $X,Y\subseteq\R^d$, where $\lambda$ denotes the usual Lebesgue measure.  The inequality is central to convex geometry and has applications in many other areas; see Gardner's survey~\cite{Gardner}.

What survives when $\R^d$ is replaced by a noncommutative group is a question of long standing.  It goes back to the measure theoretic treatment of sumsets by Henstock and Macbeath~\cite{HenstockMacbeath}, who proposed the problem of generalizing the Brunn--Minkowski inequality to general locally compact groups. Different variations of this problem were also later suggested by Hrushovski~\cite{HrushovskiPC}, by McCrudden~\cite{McCrudden}, and by Tao~\cite{TaoBlog}. Partial results for nilpotent and solvable groups were obtained by Bobkov~\cite{Bobkov}, by Gromov~\cite{Gromov}, by Hrushovski~\cite{HrushovskiLieModel}, by Leonardi and Masnou~\cite{LeoMasnou}, by McCrudden~\cite{McCrudden}, by Pozuelo~\cite{Pozuelo}, and by Tao~\cite{TaoBlog}. 

The general form of the Brunn--Minkowski inequality was found only recently, when the present authors and Zhang attached to $G$ two integer invariants, its noncompact Lie dimension $\ndim(G)$ and its helix dimension $\hdim(G)$.  For a Lie group of dimension $d$ one sets
\begin{equation}\label{eq:noncompact-dim}
  \ndim(G)=d-\max\{\dim K:K<G\text{ is compact}\}.
\end{equation}
Thus $\ndim(G)$ is the codimension of a maximal compact subgroup, and for connected $G$ it vanishes precisely when $G$ is compact.  The helix dimension is a smaller companion invariant that detects a helical obstruction carried by the center. For a connected simple group it equals the rank of the discrete center and never exceeds one. Although we have introduced the two invariants just for Lie groups, both extend to an arbitrary locally compact group: the Gleason--Yamabe solution~\cite{Gleason,Yamabe} to Hilbert's fifth problem approximates $G$ by Lie groups, and $\ndim(G)$ and $\hdim(G)$ are read off from these approximations; we refer to \cite{JTZ} for the details. 

The main theorem of \cite{JTZ} establishes the inequality with exponent $1/(\ndim(G)-\hdim(G))$, sharp whenever $\hdim(G)=0$.  A positive helix dimension, by contrast, brings a genuine loss, which \cite{JTZ} conjectured to be an artifact of the method rather than a feature of the geometry.

Our main theorem removes the loss in complete
generality.

\begin{theorem}\label{thm:all-lc}
Suppose $G$ is a locally compact group with $\ndim(G)=n\ge0$.  Then for all
nonempty compact $X,Y\subset G$ of positive Haar measure,
\begin{equation}\label{eq:general-main}
 \left(\frac{\nu_G(X)}{\nu_G(XY)}\right)^{1/n}
 +
 \left(\frac{\mu_G(Y)}{\mu_G(XY)}\right)^{1/n}
 \le 1,
\end{equation}
where for $n=0$ the left-hand side is read as the maximum of the two ratios.
\end{theorem}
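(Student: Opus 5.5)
We reduce Theorem~\ref{thm:all-lc} to the connected unimodular Lie case and then remove the helix loss there. The first step follows the reduction scheme of \cite{JTZ}.

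\textbf{Reduction to Lie groups.} By Gleason--Yamabe, $G$ has an open subgroup $G'$ and a compact normal subgroup $K\lhd G'$ such that $G'/K$ is a Lie group with $\ndim(G'/K)=\ndim(G)$. Haar measures push forward under $G'\to G'/K$, and compact sets can be replaced by their $K$-saturations. Saturation does not change $XY$ up to saturation, since $K$ is normal in $G'$. Compact $X,Y$ meet finitely many cosets of $G'$, so a standard piecewise argument reduces to $X,Y\subseteq G'$. This uses the fact that the left-hand side of \eqref{eq:general-main} is concave in the natural sense under splitting, which is the Kemperman-type argument from \cite{JTZ}. Likewise we may pass to the identity component modulo a compact subgroup. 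The case $n=0$ is immediate: there $G'$ is compact modulo the open subgroup structure, and $\mu_G(XY)\ge\mu_G(Y)$ and $\nu_G(XY)\ge\nu_G(X)$ give the maximum form.

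\textbf{Nonunimodular to unimodular.} For connected Lie $G$, the modular function $\Delta:G\to\R_{>0}$ has unimodular kernel $H=\ker\Delta$ with $\ndim(H)=n-1$ when $\Delta$ is nontrivial. We slice $X$ and $Y$ along cosets of $H$, that is, along fibres of $\log\Delta$. The right Haar measure appears for $X$ and the left Haar measure for $Y$ precisely because $\nu_G=\Delta^{-1}\mu_G$. Applying the unimodular inequality in dimension $n-1$ fibrewise, together with one-dimensional Prékopa--Leindler on $\R$, yields \eqref{eq:general-main} by the usual Borell--Brascamp--Lieb bookkeeping. So it suffices to prove $\mu(XY)^{1/n}\ge\mu(X)^{1/n}+\mu(Y)^{1/n}$ for connected unimodular Lie $G$.

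\textbf{Removing the helix loss (main obstacle).} Write $G$, up to compact factors, via a Levi/Iwasawa-type decomposition. \cite{JTZ} already gives the sharp exponent when $\hdim(G)=0$. The loss comes from simple factors with infinite cyclic center, such as $\widetilde{\PSL}_2(\R)$, where the fibration $G\to G/Z$ collapses a noncompact direction. The plan is as follows.
\begin{enumerate}[(i)]
\item Use the Iwasawa decomposition $G=KAN$, where $K$ is now noncompact with compact-mod-center part $\R^{\hdim}\times$compact.
\item Project to the solvable group $AN$ and to the helical factor $Z(K)^0\cong\R^{h}$ separately, tracking how multiplication in the $K$-coordinate is twisted by a cocycle.
\item Show that this cocycle is bounded. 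This is the key quasi-morphism fact: the rotation number on $\widetilde{\PSL}_2$ is a homogeneous quasimorphism with defect bounded by a constant $C$.
\end{enumerate}
Hence $XY$ projects onto a set containing a Minkowski sum in $\R^h$ up to an additive error $C$. Apply the bound to the powers $X^{(k)}=X\cdots X$ via a tensor-power or rescaling argument. Concretely, pass to $X^m$ and $Y^m$ with the asymptotic Brunn--Minkowski behaviour, where growth is $m^{n}$ and the bounded error becomes negligible. Combining this with a fibre-wise Brunn--Minkowski over $AN\times\R^h$ of total dimension $n$ should produce the sharp exponent.

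The delicate point is that tensor-power tricks do not apply directly to a fixed pair $(X,Y)$. I would instead localize: first prove the inequality for small neighbourhoods, where the quasimorphism is additive up to $o(1)$ and $G$ is close to its Lie algebra. Then extend to arbitrary compact sets by the "spillover"/discretization induction from \cite{JTZ}. I expect verifying that this localization is compatible with the induction to be the hardest step.
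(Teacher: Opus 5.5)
\textbf{There is a genuine gap.} Your reductions (Gleason--Yamabe, modular extension, passing to factors) are the \cite{JTZ} machinery that the paper also uses, via Fact~\ref{fact:jtz}(5). So everything rests on the step you call removing the helix loss, and that step is not a proof.

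\emph{The quasimorphism does not give a sharp inequality.} A bounded-defect quasimorphism, such as the translation number on $\widetilde{\PSL}_2(\R)$, controls the $K$-coordinate of $xy$ only up to an additive constant $C$. For a fixed pair of compact sets whose $K$-projections have length comparable to or smaller than $C$, this says nothing. It can never produce an inequality with the sharp constant.

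\emph{The amplification does not repair this.} $\widetilde{\PSL}_2(\R)$ has exponential volume growth, so $\mu_G(X^m)$ does not grow like $m^n$. Even if it did, the inequality for $(X^m,Y^m)$ does not imply it for $(X,Y)$.

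\emph{The localization fails for a structural reason.} Sets concentrated near the identity of any $d$-dimensional Lie group, compact or not, behave like sets in $\R^d$ and see the exponent $d$. But a compact group satisfies $\BM(r)$ for no $r>0$ (take $X=Y=G$). So no general local-to-global \emph{spillover} induction can exist. Any extension must use noncompactness in an essential way, and \cite{JTZ} provides no such step.

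\emph{The fibres do not combine as a product.} Your ``fibre-wise Brunn--Minkowski over $AN\times\R^h$'' treats the fibres as if they multiplied as a product. They do not: the $K$-direction and the $AN$-fibres are twisted together, and that twist is the whole difficulty.

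\textbf{The missing idea is an exact coupling rather than an $O(1)$ one.} For $G=\widetilde{\PSL}_2(\R)$ with $H=AN$:
\begin{enumerate}[(1)]
\item The quotient $G/H\cong\R$ is a line, and $H$ acts on it by increasing homeomorphisms fixing $0$ (Lemma~\ref{lem:order}). This dichotomy is unavailable on $\PSL_2(\R)$, where $G/AN$ is a circle.
\item The paper slices $X$ in the order $KH$ with fibre masses $f(t)=\nu_H(C_t)$, and $Y$ in the order $HK$ with fibre masses $g(s)=\mu_H(D_s)$.
\item It takes the monotone rearrangement $T$ of $f\,dt/\mu_G(X)$ onto $g\,ds/\mu_G(Y)$.
\item It maps $(t,h)\mapsto k_thk_{T(t)}$ for $h\in E_t=C_tD_{T(t)}$. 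Monotonicity of $T$ together with the order makes this map injective into $XY$.
\item Its Jacobian is $1+T'(t)\Delta_H(h)$, so
\[
\mu_G(XY)\ge\int\bigl(\nu_H(E_t)+T'(t)\mu_H(E_t)\bigr)\,dt .
\]
This matches exactly the right/left pairing in the asymmetric $\BM(2)$ on $AN$. H\"older's inequality then yields exponent $3$.
\end{enumerate}

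For the remaining Hermitian groups the paper never confronts the helix directly. It uses a contact parabolic $P$ with $G=CP$, $C$ compact, $\ndim(P)=\ndim(G)$ and $\dim P<\dim G$. Induction on dimension plus Fact~\ref{fact:jtz}(5) gives $\BM(\ndim P)$ for $P$, and the cocompact factor principle, Fact~\ref{fact:jtz}(4), transfers it to $G$. Your proposal contains neither the ordered transport for the base case nor this dimension-reducing factorization.
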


Inequality \eqref{eq:general-main} is the nonabelian Brunn--Minkowski conjecture proposed in~\cite{JTZ}, which the present paper thereby verifies. The helix dimension that measured the loss there plays no role in the sharp inequality, and only the noncompact Lie dimension $n$ survives. In particular, when $G$ is unimodular we prove the more familiar sharp form
\begin{equation}\label{eq:unimodular-main}
  \mu_G(XY)^{1/n}\ge \mu_G(X)^{1/n}+\mu_G(Y)^{1/n}.
\end{equation}

Several applications are discussed in Section 9. We derive a parallel volume inequality on homogeneous quotients, and specialize it to symmetric spaces of noncompact type. In the latter setting this yields the Euclidean Cartan--Hadamard isoperimetric lower bound. By classical symmetrization, this yields the corresponding Sobolev and Faber--Krahn inequalities.

\subsection*{Overview of the proof}
Two results of \cite{JTZ} organize the proof.  The reduction theorem there shows that it is enough to prove 
\eqref{eq:general-main} for Lie groups, and \eqref{eq:general-main} for an arbitrary Lie group follows once it is known for its simple factors.  The helix-free theorem resolves all simple groups with finite center.
What remains are the simply connected simple groups of Hermitian type, those with infinite center, whose Lie algebras are listed in \eqref{eq:hermitian-list}. This paper establishes \eqref{eq:unimodular-main} for such group, and Theorem~\ref{thm:all-lc} follows through the reduction just described.

Let us first briefly explain the method of ~\cite{JTZ} for a simple Lie group $G$ which we will need modify. The idea is based on considering the Iwasawa decompostion $KAN$ of $G$. They first prove a nonunimodular Brunn-Minkowski inequality for $AN$ with exponent $\ndim(AN)$ and move this to $G$ with unchanged exponent though the so-called proportionated fiber trick. This works perfectly when $K$ is compact and $\ndim(G)= \ndim(AN)$. For $G= \widetilde{\PSL}_2(\R)$, we have $K\simeq\R$ and $\ndim(AN)=2=\ndim(G)-1$, so we do not get the sharp exponent. A similar situation holds for other noncompact $G$.

We handle separately the unknown case of lowest dimension,  the universal cover $\widetilde{\PSL}_2(\R)$ of ${\PSL}_2(\R)$. This has noncompact Lie dimension three and helix dimension one. The result in \cite{JTZ} yields the exponent $1/2$, which we need to upgrade to the expected sharp value $1/3$ (Theorem~\ref{thm: PSL2}).  

The new ingredient in Theorem~\ref{thm: PSL2} is a monotone transport argument along the one dimensional factor in the Iwasawa decomposition.  We write
\[
  G=\widetilde{\PSL}_2(\R),\qquad
  H=AN,\qquad
  K=\{k_t:t\in\R\}\simeq\R.
\]
The two orders of the Iwasawa decomposition $KH$ and $HK$ give global coordinates along $K$. Using $G=KH$, we identify $G/H$ with $K\simeq\R$.  Under this identification, left multiplication by each $h\in H$ induces an increasing diffeomorphism
$
  \phi_h:\R\to\R
$
fixing the origin.  More precisely, if $q(k_th)=t$ denotes the $K$-coordinate in the $KH$-decomposition, then
\[
  q(k_t h k_s)=t+\phi_h(s).
\]
This order-preserving action is the geometric feature that makes monotone transport effective.

Let $X,Y\subset G$ be compact, and define their fibres using the two coordinate orders $KH$ and $HK$
\[
  C_t=\{c\in H:k_tc\in X\},
  \qquad
  D_s=\{d\in H:dk_s\in Y\}.
\]
Because $H$ is nonunimodular, the two fibre masses are measured using right and left Haar measure respectively, and denote them by $f(t)$ and $g(s)$. 

We then choose the nondecreasing transport map $T$ that sends the proportionated measure $f(t)\d t/\mu(X)$ to proportionated measure $g(s)\d s/\mu(Y)$. This is the same proportionated argument developed in $\PSL_2(\R)$ setting. 
At almost every relevant $t$, this gives the transport identity
\[
  T'(t)g(T(t))=\frac{\mu(Y)}{\mu(X)}f(t).
\]
For such $t$, set
$
  E_t=C_tD_{T(t)}\subseteq H.
$
For $h\in E_t$ consider the transported diagonal map
\[
  \Psi(t,h)=k_t h k_{T(t)}
\]
Its image is contained in $XY$.  The monotonicity of $T$, together with the order-preserving action of $H$ on $G/H$, implies that $\Psi$ is injective. The same quotient action also determines the Jacobian.  If
$
  \vartheta(h)=\varphi_h'(0),
$
then
\[
  J_\Psi(t,h)=1+T'(t)\vartheta(h).
\]
The area formula therefore yields
\[
  \mu_G(XY) \geq \int_\Omega(
    \nu_H(E_t)+T'(t)\mu_H(E_t))\d t,
\]
where $\Omega$ is a full fiber mass set and the transport identity holds on it.

Finally, we apply the asymmetric $\BM(2)$ inequality on the two dimensional solvable group $H$ to $C_t$ and $D_{T(t)}$. Combining this fiberwise estimate with the transport identity gives the $\BM(3)$ result for $G$. 
In summary, the one dimensional transport along $K$ and the two dimensional Brunn--Minkowski inequality inside $H$ combine to produce the sharp three dimensional exponent.

This transport construction is specific to $G= \mathfrak{sl}_2(\R)$ as it requires a factorization $G=KH$ with $K$ one-parameter and $H$ a closed connected subgroup of codimension one. For the remaining groups, the proof goes through a different direction. Let $d$ be the dimension of the group under consideration.  Using induction on dimension, we can assume we have proven the result for all connected simple Lie group with Hermitian type of dimension $< d(G)$. The reduction theorem and the helix-free theorem of~\cite{JTZ} tells us that we have also proven the result for all Lie groups of dimension $<d$. The role of the Iwasawa decomposition is now played by the contact parabolic decomposition $G=CP$ given in Proposition~\ref{prop:contact-factorization}. In this case with $G \neq \mathfrak{sl}_2(\R)$,  $C$ is a compact group of positive dimension, so the parabolic group $P$ has $\dim(P)<d$. By the consequence of the induction hypothesis described above, we have the statement for $P$, the proportionated fiber trick in~\cite{JTZ} then move this result to $G$ as desired.

\subsection*{Organization}
Section~\ref{sec:preliminaries} fixes the general Brunn--Minkowski notation and collects the structural results we borrow from \cite{JTZ}. The base case, Theorem~\ref{thm: PSL2}, occupies Sections~3--6. Sections~\ref{sec:contact} and \ref{sec:induction} set up the contact parabolic reduction and prove Theorem~\ref{thm:simple} and hence deduces Theorem~\ref{thm:all-lc}. Applications are discussed in Section~9. The type by type dimension computations behind the induction is carried out in Appendix~\ref{app:classification}.

\subsection*{Acknowledgements}
The authors would like to thank Ehud Hrushovski for introducing the authors to this problem, and Emmanuel Breuillard, Simon Machado, and Ruixiang Zhang for valuable discussions over the years. 
Part of the work was done when the first author was visiting the second author at the department of mathematics at National University of Singapore and he thanks the hospitality he received. 
The first author is partially supported by NSF grant DMS-2503063. 

\subsection*{Statement of AI} ChatGPT 5.6 is used for testing a few parallel ideas from the authors by doing computations on examples. It is also used for English grammar check and finding missing references. 
The method presented in the paper is from the authors and is inspired by these computation outcome. The authors are fully responsible for the mathematical content and the writing of the manuscript.

\section{Preliminaries}\label{sec:preliminaries}

In this section, we fix the notation used throughout the paper and record the structural results we borrow from~\cite{JTZ}.

\subsection{Haar measures and the general inequality}

Let $L$ be a locally compact group.  We write $\mu_L$ for a left Haar measure on $L$, $\nu_L$ for a right Haar measure, and $\Delta_L:L\to\R^{>0}$ for the modular function, so that
\[
  \mu_L(Eg)=\Delta_L(g)\mu_L(E)
\]
for every Borel set $E\subseteq L$ and every $g\in L$.  When $L$ is unimodular, we always take $\nu_L=\mu_L$.

For $r\in\Z^{\geq0}$ and $(x,y)\in\R^2$, as usual we set
\begin{equation}\label{eq:norm-notation}
\big\Vert (x,y)\big\Vert_{1/r}=
\begin{cases}
\big(|x|^{1/r}+|y|^{1/r}\big)^{r} &\text{ if } r\neq0,\\
\max\{|x|,|y|\} & \text{ if } r=0.
\end{cases}
\end{equation}

\begin{definition}[Brunn--Minkowski exponent]\label{def:BM}
Let $r\in\Z^{\geq0}$.  We say that $L$ satisfies the {\bf Brunn--Minkowski inequality with exponent $r$}, abbreviated as $\BM(r)$, if for all compact $X,Y\subseteq L$ of positive measure,
\[
\Bigg\Vert\Big(\frac{\nu_L(X)}{\nu_L(XY)},\frac{\mu_L(Y)}{\mu_L(XY)}\Big)\Bigg\Vert_{1/r}\leq1.
\]
\end{definition}

When $r>0$, the above says that
\begin{equation}\label{eq:BM-r}
 \left(\frac{\nu_L(X)}{\nu_L(XY)}\right)^{1/r}
 +
 \left(\frac{\mu_L(Y)}{\mu_L(XY)}\right)^{1/r}
 \leq1,
\end{equation}
and when $r=0$ the left hand side is replaced by the maximum of the two ratios.  When $L$ is unimodular and $r>0$, \eqref{eq:BM-r} is equivalent
to having the inequality
\[
  \mu_L(XY)^{1/r}\geq\mu_L(X)^{1/r}+\mu_L(Y)^{1/r}.
\]
Note that the definition does not depend on the normalization of the two Haar measures.

We also note that $\nu_L(X)\leq\nu_L(XY)$ and $\mu_L(Y)\leq\mu_L(XY)$. Indeed, $Xy\subseteq XY$ for every $y\in Y$ and $xY\subseteq XY$ for every $x\in X$, while $\nu_L$ is right invariant and $\mu_L$ is left invariant. This is the reason we pair $\nu_L$ with the left factor and $\mu_L$ with the right factor. Hence, both ratios in \eqref{eq:BM-r} are at most $1$, and in particular, every locally compact group satisfies $\BM(0)$. 
Moreover, it is clear that if $0\leq r<r'$ and $L$ satisfies $\BM(r')$, then $L$ satisfies $\BM(r)$. 

The following results from~\cite{JTZ} are stated here in the precise forms needed below.

\begin{fact}\label{fact:jtz}
Let $L$ be a Lie group and let $r,s\in\Z^{\geq0}$.
\begin{enumerate}[(1)]
 \item {\rm (Solvable case)} If $L$ is connected and solvable, then $L$ satisfies $\BM(\ndim(L))$.
 \item {\rm (Exponent splitting, \cite{JTZ}, Proposition~4.2)} Suppose $L$ is unimodular and $H\triangleleft L$ is a closed normal subgroup. If $H$ and $L/H$ are unimodular and satisfy $\BM(r)$ and $\BM(s)$ respectively, then $L$ satisfies $\BM(r+s)$.
 \item {\rm (Modular extension, \cite{JTZ}, Proposition~5.2)} Suppose $H=\ker\Delta_L$ satisfies $\BM(r)$ and $\Delta_L:L\to\R^{>0}$ is a quotient map.  Then $L$ satisfies $\BM(r+1)$.
 \item {\rm (Cocompact-factor principle, \cite{JTZ}, Proposition~6.2)} Suppose $L$ is connected and unimodular, and $L=KH$, where $H$ is connected and closed, $K$ is connected and unimodular, and $K\cap H$ is compact.  If $H$ satisfies $\BM(r)$, then $L$ satisfies $\BM(r)$.
 \item {\rm (Reduction theorem)} Let $L$ be a Lie group of dimension $d$. If all the simply connected simple Lie groups $G$ with dimension at most $d$ satisfy $\BM(\ndim(G))$, then $L$ satisfies $\BM(\ndim(L))$. 
\end{enumerate}
\end{fact}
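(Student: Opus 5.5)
The items of Fact~\ref{fact:jtz} are imported from~\cite{JTZ}. I would re-derive them in the order (2), (3), (4), (1), (5), since the later items are assembled from the earlier ones. The common engine is a fibration argument. Given compact $X,Y\subseteq L$ and a closed subgroup $H$, decompose $X$ and $Y$ into fibres along cosets of $H$, using the quotient integration formula for Haar measure, $\int_L=\int_{L/H}\int_H$. Then estimate $XY$ from below by products of fibres.

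For (2), with $H\triangleleft L$ and everything unimodular, I would proceed as follows.
\begin{itemize}
\item Write $\mu_L(X)=\int_{L/H}\mu_H(X\cap xH)\d\bar x$, and similarly for $Y$.
\item Observe that $(X\cap xH)(Y\cap yH)\subseteq XY\cap xyH$.
\item Apply $\BM(r)$ inside $H$ fibrewise, after translating fibres into $H$ by invariance.
\item Integrate over $L/H$ using $\BM(s)$ for the projections $\pi(X)\pi(Y)$.
\end{itemize}
The combination is a Prékopa--Leindler/Borell--Brascamp--Lieb type step. I would implement it through the \emph{proportionated fibre trick}: choose level sets of the fibre-mass functions of $X$ and $Y$ at proportionate heights $\lambda\|f\|_\infty$ and $\lambda\|g\|_\infty$, apply $\BM(s)$ to these superlevel sets in $L/H$, and integrate in $\lambda$. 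This yields exponent $r+s$ exactly as in the Euclidean proof of Brunn--Minkowski by induction on dimension.

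Item (3) is the same argument with $L/H\cong\R^{>0}$, which satisfies $\BM(1)$. The new feature is nonunimodularity. Here the pairing of $\nu_L$ with the left set and $\mu_L$ with the right set in Definition~\ref{def:BM} is exactly what makes the fibre masses transform correctly. Right-Haar fibres of $X$ and left-Haar fibres of $Y$ over $t=\Delta_L(\cdot)$ pick up factors $t^{\pm1}$, and these cancel after the substitution $t\mapsto\log t$.

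Item (4) is the proportionated fibre trick with $K$ playing the role of a unimodular base. One fibres $X$ as $X=\bigcup_k k(X_k)$ with $X_k\subseteq H$, and $Y$ with $H$-fibres on the left. The key observation is that since $K\cap H$ is compact, $L/H\cong K/(K\cap H)$ carries a measure for which no dimension is added. One therefore only needs, for each pair of fibres, the inclusion $k X_k Y_{k'}\subseteq XY$, together with a measure-preserving choice of pairing between $k$ and $k'$ given by a monotone rearrangement of fibre masses. These inequalities are then averaged.

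For (1), I would induct on $\dim L$ for connected solvable $L$. If $L$ is unimodular, pick a closed connected normal subgroup $H$ with $L/H\cong\R$ or a torus. Then apply (2), using $\ndim$-additivity: $\ndim(L)=\ndim(H)+\ndim(L/H)$, where the torus has $\ndim=0$ and compact factors are absorbed. If $L$ is not unimodular, apply (3) with $H=\ker\Delta_L$, whose noncompact dimension is $\ndim(L)-1$.

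For (5), I would reduce a general Lie group as follows.
\begin{itemize}
\item Pass to the identity component and then to the universal cover; a compact quotient or compact kernel changes neither $\ndim$ nor $\BM$.
\item Use the Levi decomposition $L=R\rtimes S$ with $R$ the solvable radical.
\item Split $S$ into simple factors and apply (2) repeatedly, together with (1) for $R$.
\end{itemize}

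The main obstacle is the measure-theoretic heart of (2)--(4): making the proportionated fibre trick rigorous with only $\BM$ on the factors. This requires handling measurability of $XY$-fibres and the correct modular-function bookkeeping when $H$ is nonunimodular in (3) and (4). For (5), the delicate point is that unimodularity hypotheses in (2) may fail for $R$. I would first try to route nonunimodular pieces through (3) before applying (2).
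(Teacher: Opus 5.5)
\textbf{Verdict: genuine gaps in items (4) and (5).} The paper itself gives no proof of Fact~\ref{fact:jtz}. Items (2)--(4) are quoted from \cite{JTZ}, and (5) is \cite[Theorem~1.5]{JTZ} together with the remark that its proof gives the dimension-$d$ version. Your derivation of (2) by superlevel sets, and of (1) from (2)--(3), is sound. Note only that a closed normal subgroup of a unimodular group is unimodular, and that $\ker\Delta_L$ is connected because $L/\ker\Delta_L\cong\R$.

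\textbf{Item (4).} The mechanism you give fails. Here $H$ need not be normal, so $L/H$ is not a group and there is no base inequality to integrate. The set $kX_kY_{k'}k'$ (your inclusion drops the $k'$) spreads over many $H$-cosets. A monotone rearrangement $k\mapsto k'$ needs an order on $K$. There is none on the compact semisimple $C$ to which the paper applies (4) in Section~\ref{sec:induction}. Any diagonal coupling $(k,h)\mapsto kh\tau(k)$ would also need global injectivity and a Jacobian formula. That is exactly the structure built in Sections~\ref{sec:ordered}--\ref{sec:transported-iwasawa-diagonal}: $K\cong\R$, the order-preserving action on $G/H$, and $\mathrm{Ad}_hZ=\vartheta(h)Z+Y_h$. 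There it serves to \emph{raise} the exponent, not to preserve it.

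What works is an averaging that never couples the two $K$-coordinates. Set $X_k=\{h\in H:kh\in X\}$, $Y_{k'}=\{h\in H:hk'\in Y\}$, $a(k)=\nu_H(X_k)$ and $b(k')=\mu_H(Y_{k'})$. Then $\alpha=\mu_L(X)=\int_K a(k)\d k$ and $\beta=\mu_L(Y)=\int_K b(k')\d k'$. For fixed $k'$, Fact~\ref{fact:product-integration} and right invariance give $\mu_L(XY)\ge\int_K\nu_H(X_kY_{k'})\d k$. For fixed $k$, the $HK$ form of the same formula gives $\mu_L(XY)\ge\int_K\mu_H(X_kY_{k'})\d k'$. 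Average the first bound against $b(k')\d k'/\beta$ and the second against $a(k)\d k/\alpha$, and take a $(\theta,1-\theta)$ convex combination. Write $\nu_H(X_kY_{k'})=a(k)\rho^{-r}$ and $\mu_H(X_kY_{k'})=b(k')\sigma^{-r}$, where $\rho+\sigma\le1$ by $\BM(r)$ in $H$. Then use $\min_{\rho+\sigma\le1}(A\rho^{-r}+B\sigma^{-r})=(A^{1/(r+1)}+B^{1/(r+1)})^{r+1}$ and optimize $\theta$ by H\"older. This gives $\mu_L(XY)\ge(\alpha^{1/r}+\beta^{1/r})^r$. The exponent is unchanged because only one copy of $K$ is ever used, not because of any measure on $L/H$. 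Indeed $K$ may be noncompact, e.g.\ $K\cong\R$ for $\widetilde{\PSL}_2(\R)$.

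\textbf{Item (5).} Your claim that ``a compact quotient or compact kernel changes neither $\ndim$ nor $\BM$'' does not cover the passage to the universal cover. The kernel of $\widetilde{L}_0\to L_0$ is discrete and central, possibly infinite, and $\ndim$ can drop across it: from $1$ to $0$ for $\R\to\R/\Z$, and from $3$ to $2$ for $\widetilde{\PSL}_2(\R)\to\PSL_2(\R)$. So $\BM(\ndim\widetilde{L}_0)$ on the cover does not formally give $\BM(\ndim L_0)$ on $L_0$. If you lift $X,Y$ to compact sets upstairs, the projection of $\widetilde X\widetilde Y$ onto $XY$ is not injective, and the multiplicity costs a constant factor. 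The reduction has to be carried out on $L_0$ itself. For $\PSL_2(\R)$, say, use (4) with compact $K$ and (1) for $AN$; in general, combine (2)--(4) along suitable closed normal subgroups. Likewise $L/L_0$ is discrete but may be infinite, so ``compact quotient'' does not apply there either. For unimodular $L$, (2) with $s=0$ handles it.

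\textbf{Item (3), a smaller point.} The cancellation is not produced by the substitution $t\mapsto\log t$. Conjugation by $y$ scales $\mu_H$ by $\Delta_L(y)$. So if $f,g,h$ are the $\mu_H$-fibre masses of $X,Y,XY$ over $u=\log\Delta_L$, the fibre inequality reads $h(t+s)^{1/r}\ge e^{s/r}f(t)^{1/r}+g(s)^{1/r}$. The target involves both $\nu_L(XY)=\int e^{-u}h(u)\d u$ and $\mu_L(XY)=\int h(u)\d u$. The factors $e^{\pm u}$ cancel only in a H\"older bound for $(c_1e^{-u}+c_2)h(u)$. A one-dimensional argument then gives, for all $c_1,c_2\ge0$, a linear-combination inequality with exponent $r+2$, which is equivalent to $\BM(r+1)$. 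This step needs to be written out; it is not literally the argument of (2).
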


We remark that Fact~\ref{fact:jtz}(5) is Theorem~1.5 in \cite{JTZ}. Although the statement in \cite{JTZ} only claims the sharp nonabelian Brunn--Minkowski inequality follows from the corresponding result for simply connected simple groups, but the proof of Theorem~1.5 actually gives us the stronger version with dimension $d$ stated here. 

The proof of Fact~\ref{fact:jtz}(4) uses the following product integration formula, which we also need directly.

\begin{fact}[Proposition 5.26  \cite{Knapp}]
\label{fact:product-integration}
Let $G$ be a connected unimodular Lie group and suppose $G=ST$, where $S,T$ are closed and $S\cap T$ is compact.  Haar measures may be normalized so that
\begin{equation}\label{eq:product-integration}
  \int_G f(g)\d\mu_G(g)
  =\int_S\int_T f(st)\d\nu_T(t)\d\mu_S(s)
\end{equation}
for every $f\in C_c(G)$.
\end{fact}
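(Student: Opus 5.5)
This is the classical Weil product-integration formula for a factorization $G=ST$ with $S\cap T$ compact. I would prove it by realizing the right-hand side of \eqref{eq:product-integration} as a positive, left-$G$-invariant functional on $C_c(G)$ and then invoking uniqueness of Haar measure. Let $S\times T$ act on $G$ by $(s,t)\cdot g=sgt^{-1}$. The map
\[
\pi:S\times T\to G,\qquad \pi(s,t)=st,
\]
is surjective because $G=ST$. Its fibres are the compact sets $\{(su,u^{-1}t):u\in S\cap T\}$. Hence $G$ is identified with the homogeneous space $(S\times T)/(S\cap T)$, where $S\cap T$ is embedded as $u\mapsto(u,u)$.

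\textbf{Step 1 (the functional).} I would consider the functional
\[
\Lambda(f)=\int_S\int_T f(st)\d\nu_T(t)\d\mu_S(s).
\]
It is positive, and since $f\circ\pi$ has compact support (because $\pi$ is proper, having compact fibres of a homogeneous map), it is finite on $C_c(G)$. So $\Lambda$ is a Radon measure on $G$.

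\textbf{Step 2 (invariance).} Replacing $f$ by $f(s_0\,\cdot)$ for $s_0\in S$ leaves $\Lambda(f)$ unchanged, by left invariance of $\mu_S$. Similarly, replacing $f$ by $f(\cdot\,t_0)$ for $t_0\in T$ leaves it unchanged, by right invariance of $\nu_T$. Thus the measure $\Lambda$ is left $S$-invariant and right $T$-invariant.

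\textbf{Step 3 (comparison with $\mu_G$).} This step is the key point. I would note that $\mu_G$, being bi-invariant since $G$ is unimodular, is also a left $S$-, right $T$-invariant measure on $G\cong(S\times T)/(S\cap T)$. The quotient $S\cap T$ is compact, so its modular function is trivial and is compatible with that of $S\times T$ restricted to it. By uniqueness of invariant measures on homogeneous spaces (up to scalars, whenever one exists), $\Lambda=c\,\mu_G$ for some $c>0$. Equivalently, one can argue through the Radon--Nikodym derivative: $\Lambda$ has the form $F\,\mu_G$, and both measures are invariant under the transitive action, so $F$ is constant.

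\textbf{Step 4 (normalization).} Rescaling $\mu_S$ or $\nu_T$ gives $c=1$, which is exactly the normalization asserted in the statement.

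The main obstacle is justifying Step 3 rigorously, namely that an $(S\times T)$-invariant Radon measure on this homogeneous space is unique up to scaling. I would handle it by pulling back via $\pi$ and averaging over the compact group $S\cap T$. An alternative is a Lie-theoretic Jacobian computation at the identity: the differential $d\pi$ has kernel $\{(X,-X)\}$ and image $\mathfrak s+\mathfrak t=\mathfrak g$, and equivariance propagates the density from the identity to all of $G$.
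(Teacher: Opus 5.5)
Your proposal is correct and is essentially the standard argument behind the result the paper cites from Knapp (the paper gives no proof of its own): let $S\times T$ act on $G$ by $(s,t)\cdot g=sgt^{-1}$, identify $G$ with $(S\times T)/\{(u,u):u\in S\cap T\}$, note that both $\mu_G$ (by unimodularity) and the iterated integral are invariant under this action, and conclude by uniqueness of invariant Radon measures on a homogeneous space with compact isotropy. The one point to tighten is that surjectivity and compact fibres alone do not make $\pi$ proper, nor the continuous bijection $(S\times T)/(S\cap T)\to G$ a homeomorphism; both follow from the open mapping theorem for transitive actions of $\sigma$-compact groups, which applies because closed subgroups of the connected, hence second countable, Lie group $G$ are $\sigma$-compact, and since the statement here does not assume that multiplication $S\times T\to G$ is open, you should say this explicitly.
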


We also use the standard fact that $\ker\Delta_L$ is unimodular; see
\cite[Section 4]{Folland}.

\subsection{Noncompact Lie dimension and helix dimension}

We next recall the two invariants of~\cite{JTZ} introduced in Section~\ref{sec:introduction}.  The noncompact Lie dimension supplies the exponent in our main theorem, while the helix dimension organizes the case division below.

For a connected Lie group $G$, the noncompact Lie dimension $\ndim(G)$ is given by \eqref{eq:noncompact-dim}. Note that it is additive in every exact sequence of \emph{connected} Lie groups; see \cite[Proposition~2.12(1)]{JTZ} and the example after it. We will always use this together with Fact~\ref{fact:jtz}(2).

The helix dimension $\hdim(G)$ of a connected simple Lie group $G$ is the rank of its discrete center, and it is at most one \cite[Section~2]{JTZ}.  Recall that a simply connected simple real Lie group has infinite center exactly when its symmetric space is Hermitian, or equivalently, when a maximal compactly embedded Lie algebra has a one-dimensional center; see \cite[Chapter~VIII]{Helgason} and \cite[Chapter~VII]{Knapp}. The Hermitian list is
\begin{equation}\label{eq:hermitian-list}
\begin{split}
 \mathfrak{su}(p,q),\quad
 \mathfrak{sp}_{2n}(\R),\quad
 \mathfrak{so}^*(2n),\quad
 \mathfrak{so}(2,n),\quad
 \mathfrak e_{6(-14)},\quad
 \mathfrak e_{7(-25)}.
\end{split}
\end{equation}
We use the usual low rank identifications, including $\mathfrak{sp}_2(\R)\simeq\mathfrak{sl}_2(\R)$ and $\mathfrak{so}^*(6)\simeq\mathfrak{su}(1,3)$.

By the helix-free theorem of~\cite{JTZ}, every connected simple Lie group $G$ with finite center satisfies $\BM(\ndim(G))$. Hence, it remains to treat the simply connected groups whose Lie algebras appear in \eqref{eq:hermitian-list}.

\subsection{The Iwasawa decomposition} The Iwasawa decomposition is a useful tool used in the proof of Fact~\ref{fact:jtz}(4) in \cite{JTZ}. In this subsection we collect some properties of the Iwasawa coordinates that we need. 

Let $G$ be a semisimple Lie group, we fix an Iwasawa decomposition $G=KAN$ and set $H=AN$. Recall from~\cite[Theorems~6.31 and~6.46]{Knapp} that $K$ is the analytic subgroup attached to a maximal compactly embedded subalgebra $\mathfrak k$ of $\mathfrak g$, that $K$, $A$, and $N$ are closed and connected, that $Z(G)\subseteq K$, that $A$ and $N$ are simply connected and are diffeomorphic to Euclidean spaces, and that the multiplication map $K\times A\times N\to G$ is a diffeomorphism. 

\begin{fact} \label{fact: iwasawa}
Let $G=\widetilde{\PSL}_2(\R)$, let $K$ and $H$ be as above, we have the following.
\begin{enumerate}[(1)]
\item The group $K$ is isomorphic as a Lie group to $(\R,+)$.  We fix
  such an isomorphism once and for all and write
  \[
    K=\{k_t:t\in\R\},\qquad k_tk_s=k_{t+s}.
  \]
\item The two multiplication maps
  \begin{equation}\label{eq:two-global-coordinates}
    K\times H\to G,\quad (k,h)\mapsto kh,
    \qquad
    H\times K\to G,\quad (h,k)\mapsto hk
  \end{equation}
  are diffeomorphisms.  In particular, $G=KH=HK$ and $K\cap H=\{e\}$.
\item Every compact subgroup of $G$ is trivial.  Consequently,
  $\ndim(G)=\dim(G)=3$.
\item A right Haar measure $\nu_H$ and a left Haar measure $\mu_H$ on
  $H$ can be normalized so that
  \begin{align}
   \int_G f(g)\d\mu_G(g)
   &=\int_{\R}\int_H f(k_th)\d\nu_H(h)\d t,
   \label{eq:KH-haar}\\
   &=\int_H\int_{\R}f(hk_s)\d s\d\mu_H(h)
   \label{eq:HK-haar}
  \end{align}
  for every $f\in C_c(G)$, where $\d t$ and $\d s$ denote the Lebesgue measure on $\R$.  These two identities determine $\nu_H$ and $\mu_H$ uniquely.
\item The subgroup $H$ is a $\mu_G$-null subset of $G$.
\end{enumerate}
\end{fact}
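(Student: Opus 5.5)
We prove the five items in order, each from standard structure theory of $\PSL_2(\R)$ and its universal cover.

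For (1), recall that in $\PSL_2(\R)$ the subgroup $K$ is $\mathrm{PSO}(2)\simeq\R/\pi\Z$, a circle. The Iwasawa decomposition of $\PSL_2(\R)$ is a diffeomorphism $K\times A\times N\to \PSL_2(\R)$, and $A$, $N$ are simply connected. Hence $\pi_1(\PSL_2(\R))\simeq\pi_1(\mathrm{PSO}(2))\simeq\Z$. In the universal cover the analytic subgroup $\widetilde K$ for $\mathfrak k$ is therefore the universal cover of the circle. It is a connected one-dimensional Lie group that is not compact, and hence it is isomorphic to $(\R,+)$; this gives the parametrization $k_t$.

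Item (2) follows from the diffeomorphism $K\times A\times N\to G$ recalled above (Knapp, Theorem~6.46), since $H=AN$ is a closed subgroup diffeomorphic to $A\times N$. For the order $HK$, apply inversion: $g\mapsto g^{-1}$ carries $KH$ onto $H^{-1}K^{-1}=HK$, and the map $(h,k)\mapsto hk$ is the conjugate of $(k,h)\mapsto kh$ by the inversions on $G$, $H$ and $K$. Uniqueness of the decomposition gives $K\cap H=\{e\}$.

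For (3), let $C<G$ be compact. By Cartan's theorem on maximal compact subgroups of connected Lie groups, $C$ is contained in a conjugate of a maximal compact subgroup. The maximal compact subgroups of $G$ are the maximal compact subgroups of $K\times H\cong\R\times\R^2$ (topologically), and all of them are conjugate to one inside $K$. Alternatively, and more concretely, the image of $C$ in $\PSL_2(\R)$ is compact, so it lies in a conjugate of $\mathrm{PSO}(2)$. Then $C$ lies in a conjugate of the preimage $K\cdot Z(G)=K\simeq\R$, using $Z(G)\subseteq K$. Since $(\R,+)$ has no nontrivial compact subgroups, $C=\{e\}$, and so $\ndim(G)=3-0=3$.

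For (4), apply Fact~\ref{fact:product-integration} with $(S,T)=(K,H)$, which is allowed because $G$ is unimodular (being simple) and $K\cap H=\{e\}$ is compact. Since $\mu_K$ is Lebesgue measure $\d t$, this gives \eqref{eq:KH-haar} with $\nu_H$ a right Haar measure. Applying it with $(S,T)=(H,K)$ gives $\int_H\int_K f(hk)\,\d\nu_K\,\d\mu_H$, and $\nu_K=\d s$ because $K$ is abelian; this is \eqref{eq:HK-haar}. The two normalizations must be made compatible with one fixed $\mu_G$, which is possible since each identity merely scales the $H$-measure. Uniqueness holds because a Haar measure is determined by its integrals against $C_c$ functions of product form $f(k_th)=a(t)b(h)$. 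The only point needing care is this compatibility of normalizations, and it is where I would double-check Knapp's convention on which side carries $\nu$.

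Item (5) is immediate from \eqref{eq:KH-haar}: $H=\{k_th: t=0\}$ has fibre nonempty only at $t=0$, which is a Lebesgue-null set. To make this rigorous, approximate the indicator of a compact piece of $H$ from above by $C_c$ functions, or apply \eqref{eq:KH-haar} to Borel sets via monotone class. Alternatively, $H$ is a proper closed submanifold of positive codimension, hence null for the smooth measure $\mu_G$.
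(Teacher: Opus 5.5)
Your proposal is correct and takes the same route the paper points to: the Iwasawa diffeomorphism $K\times A\times N\to G$ gives (1)--(2), the product integration formula (Fact~\ref{fact:product-integration}) applied to $(K,H)$ and to $(H,K)$ gives (4), and (3) and (5) are routine consequences. The first sentence of your argument for (3) is not meaningful as written, since it treats $K\times H$ as if it were a group, but your concrete alternative (a compact subgroup of $G$ lies in a conjugate of $\pi^{-1}(\mathrm{PSO}(2))=KZ(G)=K\simeq\R$, hence is trivial) is sound and suffices.
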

The above facts can be found in \cite[Theorem 6.46, Corollary 8.31, Theorem 8.32]{Knapp}, and corollaries of those using basic properties of $\widetilde{\PSL}_2(\R)$. 

We will also use following facts from the structure theory of Lie groups. Fact~\ref{fact:standard-structure}(1) is \cite[Theorems~12.1.17 and~12.1.18]{HilgertNeeb}, and Fact~\ref{fact:standard-structure}(2) can be derived from \cite[Theorems~14.1.3 and~14.3.11]{HilgertNeeb}.

\begin{fact}\label{fact:standard-structure}
Let $L$ be a connected Lie group. 
\begin{enumerate}[(1)]
\item Let the Lie algebra  $\mathfrak l$ of $L$ is compact. Then $L$ is isomorphic to a direct product
\[
   L\simeq M\times V,
\]
where $M$ is the unique maximal compact subgroup of $L$ and $V$
is a vector group. In particular, suppose that $L$ is simply connected and
\[
   \mathfrak l=\mathfrak c\oplus\mathfrak z,
\]
where $\mathfrak c$ is compact semisimple and $\mathfrak z$ is
central. If $C$ is the analytic subgroup with Lie algebra
$\mathfrak c$, then $C$ is compact and connected and
\[
   L\simeq C\times\mathbb R^{\dim\mathfrak z}.
\]
\item Let $D\leq L$ be compact. Then $D$ is contained in a maximal compact subgroup $M$ of $L$. Furthermore,
\[
   L/M\simeq\mathbb R^{\dim L-\dim M}
\]
as smooth manifolds. 
Consequently, if $L/D$ is contractible, then $D$ is already a maximal compact subgroup of $L$.
\end{enumerate}
\end{fact}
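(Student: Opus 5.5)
For part (1), I would reduce to the simply connected case and then descend to $L$. Since $\mathfrak l$ is compact, it splits as $\mathfrak l=[\mathfrak l,\mathfrak l]\oplus\mathfrak z(\mathfrak l)$ with $\mathfrak c=[\mathfrak l,\mathfrak l]$ compact semisimple. The universal cover of $L$ is then $\widetilde L\simeq\widetilde C\times\R^{k}$ with $k=\dim\mathfrak z$. By Weyl's theorem, a simply connected group $\widetilde C$ with compact semisimple Lie algebra is compact and has finite center. This already gives the ``in particular'' clause: when $L$ is simply connected, $L\simeq C\times\R^{\dim\mathfrak z}$, where $C=\widetilde C$ is the analytic subgroup for $\mathfrak c$.

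For general $L$, write $L=\widetilde L/\Gamma$ with $\Gamma\subseteq Z(\widetilde C)\times\R^k$ discrete and central. Let $W\subseteq\R^k$ be the real span of the projection of $\Gamma$ to $\R^k$.
\begin{itemize}
\item Because $Z(\widetilde C)$ is finite, this projection is a discrete subgroup, hence a lattice in $W$.
\item Choose a complement $V$ of $W$ in $\R^k$. Then $L\simeq\big((\widetilde C\times W)/\Gamma\big)\times V$.
\item The first factor is compact, and it is the unique maximal compact subgroup $M$, since any compact subgroup of $L$ must project trivially to the vector group $V$.
\end{itemize}

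For part (2), I would invoke the Cartan--Malcev--Iwasawa theorem. It states that maximal compact subgroups of a connected Lie group exist, are connected and conjugate, and that every compact subgroup $D$ lies in one of them, say $M$. It also gives a diffeomorphism $L\simeq M\times\R^{m}$, so $L/M\simeq\R^{\dim L-\dim M}$. The main content is this structural theorem. One standard proof goes by induction through the radical and the Levi decomposition, using the fixed point theorem for compact groups acting on nonpositively curved symmetric spaces in the semisimple case. I would cite it rather than reprove it.

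The final assertion is the only step to argue directly, and I regard it as the one needing care. Consider the fibration $M/D\to L/D\to L/M$. The base $L/M$ is contractible by the above, and $L/D$ is contractible by hypothesis. The long exact sequence of homotopy groups then shows that the fibre $M/D$ is weakly contractible, hence contractible, since it is a manifold. Now $M/D$ is a compact manifold without boundary. If it had positive dimension $p$, its top homology $H_p(M/D;\Z/2)$ would be nonzero, which contradicts contractibility. So $\dim M/D=0$, and since $M/D$ is contractible it is connected, hence a single point. Therefore $D=M$, and $D$ is a maximal compact subgroup.
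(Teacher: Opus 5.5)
Your proposal is correct and takes essentially the paper's route. The paper only cites the standard structure theory from \cite{HilgertNeeb}: the universal-cover/Weyl argument for (1) and the Cartan--Malcev--Iwasawa theorem for (2). Your fibration argument $M/D\to L/D\to L/M$, together with the nonvanishing top $\Z/2$-homology of a closed manifold, validly supplies the ``consequently'' clause that the paper leaves implicit.

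One small precision: $L/M\simeq\R^{\dim L-\dim M}$ should be drawn from the multiplication-map form of the Iwasawa decomposition, not from an abstract diffeomorphism $L\simeq M\times\R^m$. That form says $M\times H_1\times\cdots\times H_m\to L$ is a diffeomorphism for suitable one-parameter subgroups $H_i\simeq\R$, so after inversion $L/M\simeq H_m\times\cdots\times H_1$.
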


\section{One dimensional rearrangement and transportation}

We record the precise nonsmooth form of one dimensional monotone transport that will be used in later sections. The ambient group is $\R$ and all measures in this section are Lebesgue measures, which is denoted by $\lambda$. For a transportation map $T$ we use $T_\#$ for the pushforward by $T$. The proof of the lemma is mostly standard and we record a proof here for the sake of completeness. 

\begin{lemma}
\label{lem: monotone rearrangement}
Let $f,g:\mathbb R\to[0,\infty)$ be bounded, compactly supported Borel functions, and suppose that $\alpha,\beta>0$ with
\[
  \alpha=\int_{\mathbb R}f(t)\d t,
  \qquad\text{and}\qquad
  \beta=\int_{\mathbb R}g(s)\d s.
\]
Then there exist a nondecreasing Borel map $T:\mathbb R\to\mathbb R$ and a Borel set $\Omega\subseteq \mathrm{supp} (f)$ such that
\[
  \int_{\mathbb R\setminus\Omega}f(t)\d t=0,
\]
and the following properties hold.
\begin{enumerate}[(1)]
\item The map $T$ transports the normalized measure with density $f$ to the normalized measure with density $g$
\[
  T_{\#}\left(\frac{f(t)}{\alpha}\d t\right)
  =\frac{g(s)}{\beta}\d s.
\]

\item For every $t\in\Omega$, the map $T$ is differentiable at $t$, with $0<T'(t)<\infty$, and
\begin{equation}
  T'(t)g(T(t))=\frac{\beta}{\alpha}f(t).
  \label{eq:one-dimensional-jacobian}
\end{equation}

\item There are pairwise disjoint $\sigma$-compact sets $S_j\subseteq\Omega$ for $j\geq1$, such that $T|_{S_j}$ is Lipschitz for every $j$ and
\[
  \lambda\left(\Omega\setminus  \bigcup_{j=1}^{\infty}S_j\right)=0.
\]
\end{enumerate}
\end{lemma}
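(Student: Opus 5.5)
We construct $T$ as the quantile (monotone rearrangement) map through the two cumulative distribution functions. Put
\[
F(t)=\frac1\alpha\int_{-\infty}^t f,\qquad G(s)=\frac1\beta\int_{-\infty}^s g,
\]
and let $G^{-1}(u)=\inf\{s: G(s)\ge u\}$ be the left-continuous generalized inverse on $(0,1]$. Both $F$ and $G$ are Lipschitz, since $f$ and $g$ are bounded, and both are nondecreasing. We set $T=G^{-1}\circ F$, extended by constants where $F\in\{0,1\}$. Then $T$ is nondecreasing, hence Borel.

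\emph{Property (1).} For $u\in(0,1)$ we have $G^{-1}(u)\le s$ iff $u\le G(s)$. Hence
\[
\{T\le s\}=\{F\le G(s)\}
\]
up to boundary conventions. Since $F_\#(f\,\mathrm dt/\alpha)$ is uniform on $[0,1]$ (because $F$ is continuous), the pushforward of $f\,\mathrm dt/\alpha$ under $T$ has distribution function $G$. This gives (1).

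\emph{Property (2).} By Lebesgue's differentiation theorem, $F$ is differentiable with $F'=f/\alpha$ at a.e.\ $t$. Likewise $G$ is differentiable with $G'=g/\beta$ at a.e.\ $s$.

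We define $\Omega$ as the set of $t$ satisfying all of the following:
\begin{itemize}
\item $f(t)>0$ and $F'(t)=f(t)/\alpha$;
\item $0<F(t)<1$;
\item $s=T(t)$ is a point where $G'(s)=g(s)/\beta>0$ exists;
\item $G$ is strictly increasing near $s$ in the sense that $G^{-1}$ is continuous at $F(t)$.
\end{itemize}

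The set of bad $s$ (where $G'$ fails or equals $g/\beta=0$) is Lebesgue null, or carries zero $g$-mass. By property (1), its preimage under $T$ carries zero $f$-mass. Points where $G^{-1}$ jumps form a countable set of $u$-values. Their $F$-preimages are intervals on which $F$ is constant, so they carry zero $f$-mass. Hence $\int_{\R\setminus\Omega}f=0$, and $\Omega\subseteq\mathrm{supp}(f)$ after intersecting with $\{f>0\}$.

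At $t\in\Omega$, the chain rule for the inverse function (valid since $G$ is differentiable at $s$ with $G'(s)>0$ and $G^{-1}$ is continuous at $F(t)$) gives that $G^{-1}$ is differentiable at $F(t)$ with derivative $1/G'(s)$. Therefore
\[
T'(t)=\frac{f(t)/\alpha}{g(T(t))/\beta}\in(0,\infty),
\]
which is \eqref{eq:one-dimensional-jacobian}. This step is the main obstacle: one must check that the one-sided difference quotients of $G^{-1}$ converge using only pointwise differentiability of $G$ and monotonicity. I would argue directly. If $u_k\to u$ with $s_k=G^{-1}(u_k)\to s$, then $G(s_k)=u_k$ by continuity of $G$. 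So
\[
\frac{s_k-s}{u_k-u}=\frac{s_k-s}{G(s_k)-G(s)}\to \frac1{G'(s)}.
\]

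\emph{Property (3).} Let
\[
\Omega_j=\{t\in\Omega: |T(t')-T(t)|\le j|t'-t| \text{ for all } |t'-t|<1/j\}.
\]
These sets increase and exhaust $\Omega$, since $T'(t)$ is finite on $\Omega$. A nondecreasing function satisfying this local one-sided bound at each point of a set is $j$-Lipschitz on pieces of that set of diameter less than $1/j$. We cut $\Omega_j$ by a grid of intervals of length $1/(2j)$, which gives countably many pieces on which $T$ is Lipschitz.

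Each piece is Borel. By inner regularity of Lebesgue measure, we replace it by a $\sigma$-compact subset of full measure; restriction preserves the Lipschitz property. Finally, we disjointify the countable family of pieces by subtracting earlier ones, intersecting again with $\sigma$-compact full-measure subsets. The resulting $S_j$ are pairwise disjoint, $T|_{S_j}$ is Lipschitz for each $j$, and $\Omega\setminus\bigcup S_j$ is Lebesgue null.
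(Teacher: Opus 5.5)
Your proposal is correct. Part (1) is the paper's argument, and part (3) is a variant of it; the real difference is in (2).

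\textbf{Part (2).} The paper puts the differentiability set $D_T$ into $\Omega$, using Lebesgue's theorem that monotone functions are differentiable almost everywhere. It then applies the ordinary chain rule to the identity $G\circ T=F$ on $\{0<F<1\}$, at points $t$ that are Lebesgue points of $f$ and whose image $T(t)$ is a Lebesgue point of $g$ with $g(T(t))>0$.

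You instead differentiate $T=G^{-1}\circ F$ directly. You show by hand that $G^{-1}$ is differentiable at $u=F(t)$, with derivative $\beta/g(T(t))$, whenever $G^{-1}$ is continuous at $u$ and $G'(T(t))=g(T(t))/\beta>0$. You then discard the countably many jump values of $G^{-1}$; their $F$-preimages are intervals where $F$ is constant, so they are $f$-null.

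Your route yields differentiability of $T$ on $\Omega$ as a conclusion and avoids the monotone differentiation theorem, at the cost of a short inverse-function computation. The paper's route is shorter once that theorem is granted. Putting $\{f>0\}$ explicitly into $\Omega$ is also slightly cleaner than intersecting with $\mathrm{supp}(f)$, since $f(t)>0$ is exactly what forces $T'(t)>0$.

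\textbf{Part (3).} Your sets $\Omega_j$ quantify over all $t'$ rather than over rationals. This makes the Lipschitz bound on pieces of diameter $<1/j$ immediate, but your claim that the pieces are Borel needs a line of justification. It holds because $T$ is nondecreasing: for $0<t'-t<1/j$ one has $0\le T(t')-T(t)\le T(q)-T(t)$ for rational $q\in(t',t+1/j)$, so the condition reduces to rational $t'$. The paper sidesteps this by using rational $q$ from the start and invoking continuity of $T$ on $\Omega$.

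Your grid cutting replaces the paper's use of the boundedness of $T$ for far-apart pairs. Your disjointification, which re-takes $\sigma$-compact full-measure subsets after subtracting, correctly handles the fact that differences of $\sigma$-compact sets need not be $\sigma$-compact. The paper avoids that issue by working with compact pieces.
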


\begin{proof} We first prove (1). The proof follows the same argument~\cite[Proposition 2.2, Theorem 2.5]{optimaltrans}. 
We first construct the increasing transport. Introduce the probability measures
\[
  \d\rho(t)=\frac{f(t)}{\alpha}\d t,
  \qquad\text{and}\qquad
  \d\sigma(s)=\frac{g(s)}{\beta}\d s,
\]
and their distribution functions
\[
  F(t) =\frac{1}{\alpha}\int_{-\infty}^{t}f(u)\d u,
  \qquad
  G(s)=\frac{1}{\beta}\int_{-\infty}^{s}g(v)\d v.
\]
Note that both $F$ and $G$ are continuous and nondecreasing, and both have limits $0$ and $1$ at $-\infty$ and $+\infty$, respectively.

Choose $a<b$ such that $\mathrm{supp}(g)\subseteq[a,b]$. Define $Q:[0,1]\to[a,b]$ by $Q(0)=a$, $Q(1)=b$, and for $0<u<1$,
\[
  Q(u)=\inf\{s:G(s)\geq u\}
\]
Define $T=Q\circ F$. The maps $Q$ and $T$ are nondecreasing and Borel, and $T$ is bounded.

We verify the pushforward identity directly. Since $\rho$ is atomless, its distribution function $F$ is continuous. Hence $F(t)$ is uniformly distributed on $(0,1)$ when $t$ is distributed according to $\rho$, that is $F_\#\rho=\lambda|_{(0,1)}$. 
On the other hand, by the definition of $Q$, for $0<u<1$, $Q(u)\leq s$ is equivalent to $u\leq G(s)$. 
It follows that
\[
  \lambda(\{u\in(0,1):Q(u)\leq s\})=G(s),
\]
so $Q_{\#}(\lambda |_{(0,1)})=\sigma$. Consequently,
$
  T_{\#}\rho=Q_{\#}(F_{\#}\rho)=\sigma,
$
which proves (1).

\smallskip
Let us now prove (2). 
Let $L_f$ and $L_g$ be the sets of Lebesgue points of $f$ and $g$,
respectively, and let
\[
  D_T=\{t\in\mathbb R:T'(t)\text{ exists and is finite}\}.
\]
These sets may be chosen Borel. The Lebesgue differentiation theorem and the almost-everywhere differentiability of monotone functions give
$
  \lambda(\mathbb R\setminus L_f)
  =\lambda(\mathbb R\setminus L_g)
  =\lambda(\mathbb R\setminus D_T)=0.
$
We now set $Y=L_g\cap\{g>0\}$ and $I=\{t\in\mathbb R:0<F(t)<1\}$,
and define
\[
  \Omega=\mathrm{supp}(f)\cap L_f\cap D_T\cap I\cap T^{-1}(Y).
\]
Then $\Omega$ is a Borel subset of $\mathrm{supp}(f)$. Moreover, $\sigma(\mathbb R\setminus Y)=0$, so (1) implies
$\rho\bigl(T^{-1}(\mathbb R\setminus Y)\bigr)=0$.
We also have $\rho(\mathbb R\setminus I)=0$, simply because $F_{\#}\rho$ is the Lebesgue measure on $(0,1)$. The other sets removed in the definition of $\Omega$ are $\rho$-null by absolute continuity of $\rho$. Thus $\rho(\mathbb R\setminus\Omega)=0$.

For every $0<u<1$, continuity of $G$ gives
$
  G(Q(u))=u.
$
Indeed, the definition of $Q(u)$ gives $G(Q(u))\geq u$, while $G(s)<u$ for $s<Q(u)$. Letting $s\to Q(u)$ gives the claimed inequality. As a Consequence, for $0<F(r)<1$ we have
$ G(T(r))=F(r).$ If $t\in\Omega$, then $F(t)\in(0,1)$. Since $F$ is continuous, the preceding identity holds in a neighborhood of $t$. At this point,
\[
  F'(t)=\frac{f(t)}{\alpha},
  \qquad\text{and}\qquad
  G'(T(t))=\frac{g(T(t))}{\beta},
\]
and $T'(t)$ exists and is finite. The ordinary chain rule applied to $G\circ T=F$ therefore gives
\[
  \frac{g(T(t))}{\beta}T'(t)=\frac{f(t)}{\alpha}.
\]
This is \eqref{eq:one-dimensional-jacobian}. Since $f(t)>0$ and $g(T(t))>0$ on $\Omega$, the same identity yields $0<T'(t)<\infty$.

Finally we prove (3). For integers $m,n\geq1$, let
\[
  E_{m,n}:=\left\{t\in\Omega:
  |T(q)-T(t)|\leq m|q-t|
  \text{ for every }q\in\mathbb Q
  \text{ with }0<|q-t|<\frac{1}{n}\right\}.
\]
These sets are Borel as the defining conditions are countable. Since $T'(t)$ is finite for every $t\in\Omega$, differentiability gives
\[
  \Omega=\bigcup_{m,n\geq1}E_{m,n}.
\] 
Let
$
  M=\sup_{\mathbb R}T-\inf_{\mathbb R}T<\infty.
$
Suppose that $t,u\in E_{m,n}$ and $0<|t-u|<1/n$. Choose rational numbers $q_k\to u$ with $0<|q_k-t|<1/n$. Since $u\in\Omega$, the map $T$ is differentiable at $u$, hence continuous at $u$. Therefore $T(q_k)\to T(u)$. Letting $k\to\infty$ in the defining estimate gives
\[
  |T(t)-T(u)|\leq m|t-u|.
\]
If $|t-u|\geq1/n$, boundedness of $T$ gives instead
\[
  |T(t)-T(u)|\leq M\leq nM|t-u|.
\]
Hence $T|_{E_{m,n}}$ is Lipschitz with Lipschitz constant at most
$\max\{m,nM\}$. Since $\Omega\subseteq\mathrm{supp}(f)$ which is bounded, every $E_{m,n}$ has finite Lebesgue measure. By inner regularity, each $E_{m,n}$ can be covered by countably many compact
subsets of itself up to a set of measure $0$. Enumerating these compact subsets gives the family $\{P_i\}_{i=1}^\infty$. Finally we disjointify the compact Lipschitz pieces by setting $S_1=P_1$, and for $j\geq2$
\[
  S_j=P_j\setminus\bigcup_{i<j}P_i.
\]
The sets $S_j$ are pairwise disjoint and $\sigma$-compact as $P_j$ are compact. They cover $\Omega$ up to a set of measure $0$, and $T|_{S_j}$ is still Lipschitz. This proves (3). 
\end{proof}

\section{Ordered Iwasawa coordinates for $\widetilde{\PSL}_2(\R)$}\label{sec:ordered}

In this section and the next two, we prove the base case of our argument, that is the sharp inequality $\BM(3)$ for the universal cover of $\PSL_2(\R)$. The precise statement is Theorem~\ref{thm: PSL2}. Throughout Sections 4--6, we fix
\[
  G=\widetilde{\PSL}_2(\R),
\]
As in Fact~\ref{fact: iwasawa}, we fix an Iwasawa decomposition $G=KAN$ of $G$, and we set $H=AN$. We also fix the covering map $\pi:G\to\PSL_2(\R)$, with kernel the center $Z(G)$ of $G$ and it is infinite cyclic. 

Note that $G$ is simple and hence unimodular. To avoid confusion, we will therefore not use $\nu_G$ below, and $\mu_G$ will always denote a fixed Haar measure on $G$, which is both left and right invariant. On the other hand, the group $H$ is solvable and not unimodular, so on $H$ we will still use a left Haar measure $\mu_H$ and a right Haar
measure $\nu_H$ separately

In light of Fact~\ref{fact: iwasawa}(2), every $g$ in $G$ can
be written uniquely as $g=k_th$ with $t\in\R$ and $h\in H$.  From this we define a smooth coordinate
\begin{equation}\label{eq:q-coordinate}
  q:G\to\R,\qquad q(k_th)=t,
\end{equation}
which records the $K$-coordinate of $g$ in the $KH$ order.  The level sets of $q$ are the left cosets of $H$.  In particular $q^{-1}(0)=H$, which has $\mu_G$-measure $0$.

The other order $HK$ tells us how to move a copy of $K$ past a given element of $H$.  More precisely, for $h\in H$ and $s\in\R$, we view $hk_s$ as an element of $G=HK$, and then use the order $KH$ to write it uniquely as
\begin{equation}\label{eq:phi-def}
  hk_s=k_{\phi_h(s)}h_s,
\end{equation}
where $h_s\in H$. 
In other words, $\phi_h(s)=q(hk_s)$. The next lemma records some properties of the map $\phi_h:\R\to\R$. Here we say two elements $s,t\in\R$ have the same sign if $t>0$ when $s>0$,  $t=0$ when $s=0$, and  $t<0$ when $s<0$. 

\begin{lemma}\label{lem:order}
Suppose $h\in H$.  Then the map $\phi_h:\R\to\R$ defined by
\eqref{eq:phi-def} is an increasing homeomorphism with $\phi_h(0)=0$. Therefore for all $t,s\in\R$ we have
\begin{equation}\label{eq:sign-formula}
 q(k_thk_s)=t+\phi_h(s),
\end{equation}
and $\phi_h(s)$ has the same sign as $s$.
\end{lemma}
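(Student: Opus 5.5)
The plan is to read $\phi_h$ as the action of $h$ on the coset space $G/H$, which Fact~\ref{fact: iwasawa}(2) identifies with $K\simeq\R$ via $k_sH\mapsto s$. Under this identification $q(g)$ is the coordinate of the coset $gH$. By \eqref{eq:phi-def}, $h\cdot k_sH=hk_sH=k_{\phi_h(s)}H$, so $\phi_h$ is exactly the map induced on $G/H$ by left multiplication by $h$. Everything else will come from this interpretation together with the trivial intersection $K\cap H=\{e\}$.

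First, the elementary properties. Since $\phi_h(s)=q(hk_s)$ and $q$ is smooth, $\phi_h$ is continuous. Moreover $\phi_h(0)=q(h)=0$ because $h\in H$. Formula \eqref{eq:sign-formula} is immediate:
\[
k_thk_s=k_tk_{\phi_h(s)}h_s=k_{t+\phi_h(s)}h_s,
\]
so $q(k_thk_s)=t+\phi_h(s)$.

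Next, bijectivity. Since $\phi_h$ is the action of $h$ on $G/H$, we have $\phi_{hh'}=\phi_h\circ\phi_{h'}$ and $\phi_e=\mathrm{id}$. Hence $\phi_{h^{-1}}$ is a continuous two-sided inverse, and $\phi_h$ is a homeomorphism of $\R$. Injectivity can also be seen directly. If $\phi_h(s)=\phi_h(s')$, then $hk_s=k_ah_s$ and $hk_{s'}=k_ah_{s'}$ for the same $a$. Thus $k_{s'-s}=h_s^{-1}h_{s'}\in K\cap H=\{e\}$, which forces $s=s'$. A homeomorphism of $\R$ is strictly monotone, so it remains to exclude the decreasing case.

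This orientation step is the only real point. I will use connectedness of $H=AN$. Consider the function $h\mapsto \phi_h(1)-\phi_h(0)=q(hk_1)$ on $H$. It is continuous on $H$ because $q$ and multiplication are continuous. It never vanishes, since $\phi_h$ is injective. At $h=e$ it equals $1>0$. Since $H$ is connected, it is positive everywhere, so every $\phi_h$ is increasing.

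Finally, the sign statement follows from $\phi_h(0)=0$ together with strict monotonicity: $\phi_h(s)>0$ for $s>0$ and $\phi_h(s)<0$ for $s<0$.
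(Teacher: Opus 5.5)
Your proposal is correct and follows essentially the paper's argument. You identify $\phi_h$ with the action of $h$ on $G/H\cong\R$, and you rule out orientation reversal by the continuity and nonvanishing of $h\mapsto q(hk_1)$ together with connectedness of $H=AN$. The only minor difference is that you get the homeomorphism property from the action identity $\phi_{h^{-1}}=\phi_h^{-1}$ and continuity of $q$, whereas the paper first checks that $t\mapsto k_tH$ is a homeomorphism $\R\to G/H$ and then conjugates left translation by $h$.
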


\begin{proof}
We first identify $\phi_h$ as the map induced by a left translation. By Fact~\ref{fact: iwasawa}(2), every left coset of $H$ contains exactly one element of $K$, and the map
\[
  \psi:\R\to G/H,\qquad t\mapsto k_tH,
\]
is a continuous bijection. Note that $q$ is constant on left cosets of $H$ and hence it induces a continuous map $\bar q:G/H\to\R$, which is the inverse of $\psi$. Therefore $\psi$ is a homeomorphism, and we use it to identify $G/H$ with $\R$ for the rest of the proof.  

Under this identification, the action of $h$ on $G/H$ by left translation is exactly $\phi_h$. Indeed, by \eqref{eq:phi-def},
\[
  h(k_sH)=k_{\phi_h(s)}h_sH=k_{\phi_h(s)}H.
\]
Left translation by $h$ is a homeomorphism of $G/H$.  Hence $\phi_h$ is a homeomorphism of $\R$, and it fixes the origin since $hH=H$. So we have $\phi_h(0)=0$ as desired.

We next show that $\phi_h$ is increasing. Note that a homeomorphism of $\R$ is either increasing or decreasing, so we may define a map
\[
\theta:H\to\{\pm1\}
\]
by defining $\theta(h)=1$ when $\phi_h$ is increasing and $\theta(h)=-1$ otherwise. This is the orientation character of the action of $H$ on $G/H$. As $h\mapsto\phi_h$ is a left action and hence $\phi_{hh'}=\phi_h\circ\phi_{h'}$. Now $\phi_h$ is injective and $\phi_h(0)=0$, so $\phi_h(1)\neq0$, and $\theta(h)$ is the sign of $\phi_h(1)$. The map $H\to\R$, $h\mapsto\phi_h(1)=q(hk_1)$ is continuous by \eqref{eq:q-coordinate} and Fact~\ref{fact: iwasawa}(2). Therefore 
$\theta$ is continuous. Finally, the group $H=AN$ is connected and $\theta(e)=1$, so $\theta$ is constantly $1$. In other words, $\phi_h$ is increasing for every $h$ in $H$.

It remains to deduce \eqref{eq:sign-formula} and the sign property. Using \eqref{eq:phi-def} and the fact $k_tk_{t'}=k_{t+t'}$ in $K$, we get
\[
  k_thk_s=k_t\,k_{\phi_h(s)}h_s=k_{t+\phi_h(s)}h_s,
\]
so \eqref{eq:sign-formula} follows from the definition
\eqref{eq:q-coordinate} of $q$. For the sign rule, the map $\phi_h$ is increasing with $\phi_h(0)=0$, so $\phi_h(s)>0$ for $s>0$ and $\phi_h(s)<0$ for $s<0$, which is the desired conclusion.
\end{proof}

It is worth explaining that passing to the universal cover
contributes here.  Lemma~\ref{lem:order} says that $H$ acts on the one-dimensional homogeneous space $G/H$ by increasing homeomorphisms fixing a point. In the next two sections we use the dichotomy that comes with this order: the complement of the fixed point in $G/H\cong\R$ has two components, and \eqref{eq:sign-formula} records which of the two a given product falls into. For $\PSL_2(\R)$ itself the corresponding homogeneous space is the flag variety $\PSL_2(\R)/AN$, which is a circle, and $AN$ still acts on it by orientation-preserving homeomorphisms fixing the base point. However, the complement of a point in a circle is connected, so no dichotomy of the above kind is available there.  Passing to the universal cover unwinds this circle into the line $G/H\cong\R$.

We have the following corollary on Lie algebras. We write $\mathfrak k, \mathfrak h, \mathfrak g$ for the Lie algebras of $K,H,G$ respectively as usual, and $\mathrm{Ad}_hZ$ the adjoint action of $h$ to the Lie algebra vector $Z$. 
\begin{corollary} \label{cor:quotient density}
Let $Z\in\mathfrak k$ be determined by $k_t=\exp(tZ)$. There is a smooth positive character
\[
\vartheta:H\to\mathbb R_{>0}
\]
such that for every $h\in H$ there is a unique vector $Y_h\in\mathfrak h$ satisfying
\begin{equation}
  \mathrm{Ad}_hZ=\vartheta(h)Z+Y_h.
  \label{eq:adjoint-decomposition}
\end{equation}
Moreover, we have $
  \vartheta(h)=\phi_h'(0)=\Delta_H(h)$, and hence $
  \d\mu_H(h)=\vartheta(h)\d\nu_H(h)$.
\end{corollary}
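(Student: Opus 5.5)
The plan is to work entirely at the Lie algebra level, using the Iwasawa splitting $\mathfrak g=\mathfrak k\oplus\mathfrak h$ from Fact~\ref{fact: iwasawa}(2), with $\mathfrak k=\R Z$.

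\textbf{Defining $\vartheta$ and showing it is a positive character.}
1. Since $H$ normalizes $\mathfrak h$, each $\mathrm{Ad}_h$ preserves $\mathfrak h$. It therefore induces a linear map on the one-dimensional quotient $\mathfrak g/\mathfrak h$, which is spanned by the class of $Z$.
2. That induced map is multiplication by some scalar, which I call $\vartheta(h)$. This gives \eqref{eq:adjoint-decomposition}, and $Y_h$ is unique because the sum $\mathfrak k\oplus\mathfrak h$ is direct.
3. $\vartheta$ is smooth, since it is a coordinate of the smooth map $h\mapsto \mathrm{Ad}_hZ$.
4. $\vartheta$ is multiplicative. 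Indeed, $\mathrm{Ad}_{hh'}Z=\mathrm{Ad}_h(\vartheta(h')Z+Y_{h'})\equiv\vartheta(h)\vartheta(h')Z \pmod{\mathfrak h}$.
5. $\vartheta$ never vanishes, because $\mathrm{Ad}_h$ is invertible and so is the induced map on $\mathfrak g/\mathfrak h$. Since $H$ is connected and $\vartheta(e)=1$, this gives $\vartheta>0$.

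\textbf{Identifying $\vartheta(h)=\phi_h'(0)$.}
1. The coordinate $q$ of \eqref{eq:q-coordinate} is invariant under right multiplication by $H$. Hence
\[
\phi_h(s)=q(hk_s)=q(hk_sh^{-1})=q\bigl(\exp(s\,\mathrm{Ad}_hZ)\bigr).
\]
2. Differentiate at $s=0$. From $q(k_th)=t$ we get $dq_e(Z)=1$ and $dq_e|_{\mathfrak h}=0$.
3. Therefore $\phi_h'(0)=dq_e(\mathrm{Ad}_hZ)=\vartheta(h)$ by \eqref{eq:adjoint-decomposition}.

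\textbf{Identifying $\vartheta=\Delta_H$.}
1. Choose a basis of $\mathfrak g$ adapted to $\mathfrak h\subset\mathfrak g$. In it, $\mathrm{Ad}_G(h)$ is block triangular, with diagonal blocks $\mathrm{Ad}_H(h)$ and $\vartheta(h)$. So $\det\mathrm{Ad}_G(h)=\vartheta(h)\det\mathrm{Ad}_H(h)$.
2. $G$ is simple, hence unimodular, so $|\det\mathrm{Ad}_G(h)|=1$. This gives $|\det\mathrm{Ad}_H(h)|=\vartheta(h)^{-1}$.
3. With the convention $\mu_H(Eh)=\Delta_H(h)\mu_H(E)$, write $\mu_H(Eh)=\mu_H(h^{-1}Eh)$. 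Conjugation by $h^{-1}$ scales a left Haar measure by $|\det\mathrm{Ad}_H(h^{-1})|$.
4. Hence $\Delta_H(h)=|\det\mathrm{Ad}_H(h)|^{-1}=\vartheta(h)$.

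\textbf{Relating $\mu_H$ and $\nu_H$.}
1. The general identity $\d\mu_H=c\,\Delta_H\,\d\nu_H$ holds for some constant $c>0$.
2. The remaining point is that $c=1$ for the specific normalizations fixed by \eqref{eq:KH-haar} and \eqref{eq:HK-haar}. I expect this to be the main, albeit mild, obstacle.
3. To settle it, I would test both formulas on the same $f\in C_c(G)$ supported in a small neighbourhood of $e$.
4. I would then compare the change of variables $(t,h)\mapsto(h',s)$ defined by $k_th=h'k_s$. Its differential at $(0,e)$ is the identity on $\mathfrak k\oplus\mathfrak h$.
5. Shrinking the support, the two Haar densities at $e$ must agree, which forces $c=1$. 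An alternative is the Fubini-type computation of $\mu_G$ via Fact~\ref{fact:product-integration} applied in both orders.
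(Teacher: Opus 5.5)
Your proposal is correct and follows essentially the same route as the paper: you define $\vartheta(h)$ as the scalar by which $\mathrm{Ad}_h$ acts on the line $\mathfrak g/\mathfrak h$, obtain $\vartheta(h)=\phi_h'(0)$ by differentiating $\exp(s\,\mathrm{Ad}_hZ)$ against the $K$-coordinate (you do this through $q$, the paper through $T_{eH}(G/H)$, which amounts to the same computation), and get $\vartheta=\Delta_H$ from the block-triangular determinant together with unimodularity of $G$. Your extra check that the constant in $\d\mu_H=c\,\Delta_H\,\d\nu_H$ equals $1$ for the normalizations fixed by \eqref{eq:KH-haar} and \eqref{eq:HK-haar} is something the paper leaves implicit in its ``and hence''; your comparison of the two Haar densities at the identity settles it correctly, and this constant genuinely matters later in the Jacobian computation for the transported diagonal.
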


\begin{proof}
Differentiating the diffeomorphism $K\times H\to G$ at the identity gives the direct-sum decomposition $\mathfrak g=\mathbb R Z\oplus\mathfrak h$. Write $\varpi:\mathfrak g \to \mathfrak g/\mathfrak h$ the projection map, thus $\mathfrak g/\mathfrak h$ is one-dimensional and $\varpi(Z)$ is a basis of this quotient.

For $h\in H$, the map $\mathrm{Ad}_h$ preserves
$\mathfrak h$. It induces a linear automorphism of $\mathfrak g/\mathfrak h$. Since this quotient is one-dimensional, there
is a unique nonzero scalar $\vartheta(h)$ such that
\[
  \varpi\bigl(\mathrm{Ad}_hZ\bigr) =\vartheta(h)\varpi(Z).
\]
The induced adjoint action is a smooth representation of $H$ on a one-dimensional vector space. As a consequence, $\vartheta$ is a smooth character. The direct-sum decomposition then gives the unique vector
\[
  Y_h=\mathrm{Ad}_hZ-\vartheta(h)Z\in\mathfrak h.
\]

We next identify the scalar $\vartheta(h)$. Recall the standard tangent space identification
\[
  T_{eH}(G/H)\simeq\mathfrak g/\mathfrak h.
\]
For every $s$, observe that $h\exp(sZ)H
  =\exp\bigl(s\mathrm{Ad}_hZ\bigr)H$.
On the other hand, the definition of $\phi_h$ in Lemma~\ref{lem:order} gives
\[
  h\exp(sZ)H=k_{\phi_h(s)}H
  =\exp\bigl(\phi_h(s)Z\bigr)H.
\]
Differentiating the last two expressions at $s=0$ in $T_{eH}(G/H)\simeq\mathfrak g/\mathfrak h$ yields
\[
  \varpi\bigl(\mathrm{Ad}_hZ\bigr)
  =\phi_h'(0)\varpi(Z).
\]
Since $\varpi(Z)\neq0$, we obtain $\vartheta(h)=\phi_h'(0)>0$ which shows that the character $\vartheta$ takes values in $\mathbb R_{>0}$.

It remains to relate this coefficient to the modular function.
The short exact sequence
\[
  0\to\mathfrak h\to\mathfrak g\to\mathfrak g/\mathfrak h
  \to0
\]
is invariant under $\mathrm{Ad}_h$. Therefore the absolute determinant factors as
\[
  \left|\det\bigl(\mathrm{Ad}_h|_{\mathfrak g}\bigr)\right|
  =\left|\det\bigl(\mathrm{Ad}_h|_{\mathfrak h}\bigr)\right|
  \vartheta(h).
\]
Since the group $G$ is unimodular, so the left hand side is $1$. As
$
  \Delta_H(h)
  =\left|\det(\mathrm{Ad}_h|_{\mathfrak h})\right|^{-1},
$
we conclude $\Delta_H(h)=\vartheta(h)$.
\end{proof}

\section{The transported Iwasawa diagonal product}
\label{sec:transported-iwasawa-diagonal}

We now combine the one-dimensional transport with the two orders $KH$ and $HK$ of the Iwasawa decomposition. For product set $XY$, the first order produces right Haar fibres of $X$, and the second produces left Haar fibres of $Y$. We couple the two fibre-mass distributions by the rearrangement lemma and map each moving product fibre into $XY$. Lemma~\ref{lem:order} provides global injectivity, while Corollary~\ref{cor:quotient density} provides the modular term in the Jacobian. 

Let $X,Y\subset G$ be compact sets of positive Haar measure. In the two orders of the Iwasawa decomposition, define the fiber
\[
  C_t=\{h\in H:k_t h\in X\},
  \qquad\text{and}\qquad
  D_s=\{h\in H:h k_s\in Y\}.
\]
Every $C_t$ and $D_s$ is compact. Define the corresponding fibre measure functions by
\[
  f(t)=\nu_H(C_t),
  \qquad \text{and}\qquad
  g(s)=\mu_H(D_s).
\]

Clearly $f$ and $g$ have compact support and are bounded, and they are both Borel measurable. 
The two integration formulas in Fact~\ref{fact: iwasawa} gives $\mu_G(X) = \int_{\mathbb R}f(t)\d t$ and $\mu_G(Y)=\int_{\mathbb R}g(s)\d s$. Write $\alpha = \mu_G(X)$ and $\beta=\mu_G(Y)$. 

Apply Lemma~\ref{lem: monotone rearrangement} to $f$ and $g$. Let $T:\mathbb R\to\mathbb R$ and $\Omega\subset\mathrm{supp}(f)$ be the resulting transport map and full measure set, and for $t\in\Omega$ write $p(t)=T'(t)$. 
Thus $T$ is nondecreasing, $0<p(t)<\infty$ on $\Omega$, and
$p(t)g(T(t))=\frac{\beta}{\alpha}f(t)$.
For $t\in\Omega$, write \[
 E_t=C_tD_{T(t)}.
\]

\begin{proposition}
\label{prop: transported diagonal}
With the notation defined above,
\begin{equation}
  \mu_G(XY)\geq
  \int_{\Omega}\left(\nu_H(E_t)+p(t)\mu_H(E_t)\right)\d t.
  \label{eq:transported-diagonal-bound}
\end{equation}
\end{proposition}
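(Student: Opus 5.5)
The plan is to realize $XY$ as the image of the transported diagonal map
\[
  \Psi:\ \mathcal D=\{(t,h):t\in\Omega,\ h\in E_t\}\to G,\qquad \Psi(t,h)=k_thk_{T(t)},
\]
and then apply the area formula with respect to the coordinates $\R\times H\to G$, $(u,h)\mapsto k_uh$. In these coordinates $\mu_G$ is $\d u\,\d\nu_H$ by \eqref{eq:KH-haar}. First, if $h=cd$ with $c\in C_t$ and $d\in D_{T(t)}$, then $\Psi(t,h)=(k_tc)(dk_{T(t)})\in XY$, so $\Psi(\mathcal D)\subseteq XY$.

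The second step is injectivity. Write $\Psi(t,h)=k_{t+\phi_h(T(t))}h_{T(t)}$ as in \eqref{eq:phi-def}. Suppose $\Psi(t,h)=\Psi(t',h')$ with $t<t'$. Then $T(t)\le T(t')$, and by Lemma~\ref{lem:order} the two $q$-coordinates $t+\phi_h(T(t))$ and $t'+\phi_{h'}(T(t'))$ must agree. This alone does not immediately give a contradiction, because $h\neq h'$ in general. The cleaner route is to compare $g=k_thk_{T(t)}$ and $g=k_{t'}h'k_{T(t')}$ directly. From them, $k_{t'-t}h'k_{T(t')-T(t)}=h$, so $q(h)=0$. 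But \eqref{eq:sign-formula} gives $q(k_{t'-t}h'k_{T(t')-T(t)})=(t'-t)+\phi_{h'}(T(t')-T(t))>0$, since $t'-t>0$ and $\phi_{h'}$ preserves sign with $T(t')-T(t)\ge0$. This is a contradiction, so $t=t'$, and then $h=h'$ follows by cancellation. This is exactly where monotonicity of $T$ and the order structure are used.

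The third step is the Jacobian. Fix $t$ in a Lipschitz piece $S_j$ from Lemma~\ref{lem: monotone rearrangement}(3), at a point of differentiability. Right-translating by $k_{T(t)}$ does not change $\mu_G$, since $G$ is unimodular, so I compute the derivative of $(t,h)\mapsto k_thk_{T(t)}$ relative to $(t,h)\mapsto k_th$. Varying $h$ gives the identity on the $H$-direction up to the right translation. Varying $t$ gives $Z$ plus $p(t)\,\mathrm{Ad}_{k_th}Z$ in left-trivialized form. Using \eqref{eq:adjoint-decomposition} modulo $\mathfrak h$, the transverse component is $1+p(t)\vartheta(h)$. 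The extra $\mathfrak h$-part $Y_h$ is killed by the triangular structure of the Jacobian matrix. One must check that the right translation by $k_{T(t)}$, combined with measuring in $\nu_H$, produces no further modular factor; I expect this bookkeeping to be the main obstacle. My plan there is to write $\Psi(t,h)=k_t(hk_{T(t)})$ and use the $HK$ coordinates through \eqref{eq:HK-haar} for the $h$-variable at fixed $t$.

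Granting $J_\Psi=1+p(t)\vartheta(h)$ relative to $\d t\,\d\nu_H$, injectivity and the area formula applied on each $S_j\times H$ (where $T$ is Lipschitz, with $\Omega\setminus\bigcup S_j$ null) give
\[
  \mu_G(XY)\ge\sum_j\int_{S_j}\int_{E_t}\bigl(1+p(t)\vartheta(h)\bigr)\d\nu_H(h)\d t .
\]
Corollary~\ref{cor:quotient density} gives $\vartheta\,\d\nu_H=\d\mu_H$, which turns the right-hand side into \eqref{eq:transported-diagonal-bound}. Measurability of $\mathcal D$ follows because $\{(t,h):h\in C_tD_{T(t)}\}$ is the projection of a Borel set with compact fibres. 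If needed, I will sidestep this by approximating $T$ on compact pieces.
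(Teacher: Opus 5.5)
This is the paper's proof essentially step for step: the same transported diagonal $\Psi(t,h)=k_thk_{T(t)}$, the same injectivity argument via $h=k_{t'-t}h'k_{T(t')-T(t)}$ and Lemma~\ref{lem:order}, the same Jacobian $1+p(t)\vartheta(h)$ relative to $\d t\d\nu_H$, and the same area formula on the Lipschitz pieces $S_j$ followed by $\vartheta\d\nu_H=\d\mu_H$. The normalization you left open closes as in the paper, without $HK$ coordinates: your $Z+p(t)\mathrm{Ad}_{k_th}Z$ is the \emph{right}-trivialized (not left-trivialized) $t$-derivative; the right-invariant frame $\d(R_h)_eX_i$ has unit $\nu_H$-density and right-trivializes to $\mathrm{Ad}_{k_t}X_i$; and since $\mu_G$ is right invariant and $|\det\mathrm{Ad}_{k_t}|=1$, you may strip $\mathrm{Ad}_{k_t}$ from all three columns, after which (and not before, since the $H$-columns span $\mathrm{Ad}_{k_t}\mathfrak h$) reducing modulo $\mathfrak h$ gives the triangular matrix with determinant $1+p(t)\vartheta(h)$ and no extra modular factor.
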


\begin{proof}
The proof has three parts. We first establish the set theoretic inclusion and injectivity, then compute the Jacobian at differentiability points of $T$, and finally justify the area formula on measurable Lipschitz pieces.

\smallskip
Let us first develop the transported diagonal and its injectivity.
Consider the moving domain
\[
  \mathcal D=\{(t,h)\in\Omega\times H:h\in E_t\},
\]
and for $(t,h)\in\mathcal D$ define
\[
  \Psi(t,h)=k_t h k_{T(t)}.
\]
If $h\in E_t$, choose $c\in C_t$ and $d\in D_{T(t)}$ such that $h=cd$. Then
\[
  \Psi(t,h)=k_tcdk_{T(t)}=(k_t c)(d k_{T(t)})\in XY.
\]
Therefore $\Psi(\mathcal D)\subseteq XY$.

We next prove that $\Psi$ is injective. Suppose it is not and that $\Psi(t,h)=\Psi(t',h')$ for two points in $\mathcal D$. Note that if $t=t'$, then $T(t)=T(t')$, and cancellation on the left and right gives $h=h'$.

Suppose instead that $t<t'$. Since $T$ is nondecreasing, $T(t')-T(t)\geq0$, and 
\[
  h=k_{t'-t}\,h'\,k_{T(t')-T(t)}.
\]
As $h\in H$, so the $q$-coordinate of the right hand side is zero. On the other hand, the order identity (Lemma~\ref{lem:order}) gives us
\[
  q\bigl(k_{t'-t}h'k_{T(t')-T(t)}\bigr)
  =(t'-t)+\phi_{h'}\bigl(T(t')-T(t)\bigr)>0
\]
as the first term is strictly positive and the second is nonnegative, it is a contradiction. Hence $\Psi$ is injective on all of $\mathcal D$.

\smallskip
Let us now compute the Jacobian. We compute how $\Psi$ changes in its three input directions: one direction comes from the real variable $t$, and two directions come from the two dimensional group $H$.

Recall that $\mathfrak h=T_eH$ is the tangent space of $H$ at its identity.  Choose a basis $X_1,X_2$ of $T_eH$ normalized to have unit density with respect to $\nu_H$ at $e$. Since right multiplication $R_h:x\mapsto xh$ preserves the right Haar measure $\nu_H$, its differential preserves the corresponding density. Therefore for $i=1,2$
\[
X_i^R(h)=\d(R_h)_eX_i,
\]
form a unit density basis at $h$. Since $\partial_t$ has unit density for $\d t$, the ordered basis
\[
\bigl(\partial_t,X_1^R(h),X_2^R(h)\bigr)
\]
has unit density for the product measure $\d t\d\nu_H$.

We next choose volume coordinates on the target. Let $Z$ be the tangent vector determined by $k_t=\exp(tZ)$. The derivative at $(0,e)$ of the coordinate map $(t,h)\mapsto k_th$ sends
$(\partial_t,X_1,X_2)$ to $(Z,X_1,X_2)$. By Fact~\ref{fact: iwasawa}(4) the latter ordered basis has $\mu_G$-volume $1$ at the identity of $G$.

The derivative of $\Psi$ at $(t,h)$ produces tangent vectors at the point $g=\Psi(t,h)$. In order to express all of them in the fixed basis $(Z,X_1,X_2)$, we move them back to the identity by a right translation $g^{-1}$. Therefore, for $v\in T_gG$, write
\[
  \omega_g^R(v)=\d(R_{g^{-1}})_g v,
\]
which is known as the right Maurer form.  Since $\mu_G$ is right invariant, moving the three output vectors in this way does not change their volume.

Fix $t\in\Omega$ and write $p(t)=T'(t)$. Recall that $\mathrm{Ad}_aX$ is the tangent vector obtained by differentiating $a\exp(rX)a^{-1}$ at $r=0$. Direct differentiation of $\Psi(t,h)$ for $i=1,2$ then gives
\begin{align}
  \omega_{\Psi(t,h)}^R
    \bigl(\d\Psi_{(t,h)}(\partial_t)\bigr)
  &=\mathrm{Ad}_{k_t}
    \bigl(Z+p(t)\mathrm{Ad}_hZ\bigr),
  \label{eq:transported-diagonal-t-derivative}\\
  \omega_{\Psi(t,h)}^R
    \bigl(\d\Psi_{(t,h)}(0,X_i^R(h))\bigr)
  &=\mathrm{Ad}_{k_t}X_i.
  \label{eq:transported-diagonal-h-derivative}
\end{align}
Indeed, when $t$ changes, the leftmost factor $k_t$ contributes the direction $Z$, while the rightmost factor $k_{T(t)}$ contributes $p(t)\mathrm{Ad}_{k_th}Z$, this gives \eqref{eq:transported-diagonal-t-derivative}. 
When $h$ moves in the direction $X_i^R(h)$, we use the curve $r\mapsto\exp(rX_i)h$. After moving the resulting output vector back to the identity, it becomes $\mathrm{Ad}_{k_t}X_i$ which gives \eqref{eq:transported-diagonal-h-derivative}.

Since $G$ is unimodular, we have
\[
  \left|\det\bigl( \mathrm{Ad}_{k_t}|_{\mathfrak g}\bigr)\right|=1.
\]
Thus applying $\mathrm{Ad}_{k_t}$ to all three columns does not change the absolute value of their determinant. Since we can remove this common map, the three columns are now
\[
  Z+p(t)\mathrm{Ad}_hZ,\qquad X_1,\qquad X_2.
\]
By Corollary~\ref{cor:quotient density}, we have 
\[
  \mathrm{Ad}_hZ=\vartheta(h)Z+Y_h. 
\]
We now write $Y_h=a_1X_1+a_2X_2$. Relative to the ordered basis $(Z,X_1,X_2)$, the three columns form the matrix
\[
  \begin{pmatrix}
    1+p(t)\vartheta(h) & 0 & 0\\
    p(t)a_1            & 1 & 0\\
    p(t)a_2            & 0 & 1
  \end{pmatrix}.
\]
As the matrix is triangular, its determinant is  $1+p(t)\vartheta(h)$. It is positive because $p(t)>0$ and $\vartheta(h)>0$. Hence the Jacobian relative to $\d t\d\nu_H$ and $\mu_G$ is
\begin{equation}
  J_{\Psi}(t,h)=1+p(t)\vartheta(h).
  \label{eq:transported-diagonal-jacobian}
\end{equation}

Finally, let us justify the area formula. By Lemma~\ref{lem: monotone rearrangement}, there are pairwise disjoint Borel sets
$S_j\subseteq \Omega$ such that $T|_{S_j}$ is Lipschitz and the union of $S_j$ is $\lambda$-almost $\Omega$.
For each $j$, define the triple incidence set
\[
  R_j=\left\{(t,c,d)\in S_j\times H\times H:
    c\in C_t, d\in D_{T(t)}\right\}.
\]
Note that $ R_j$ is $\sigma$-compact. Its continuous image
\[
  Q_j=\{(t,cd):(t,c,d)\in R_j\}\subseteq S_j\times H
\]
is also $\sigma$-compact and hence Borel, and its vertical section over $t\in S_j$
is exactly $(Q_j)_t=C_tD_{T(t)}=E_t.$

As $T$ only Lipschitz on $S_j$, we will need to extend it to all $\R$ in order to apply the area formula. 
Let $L_j$ be a Lipschitz constant for $T|_{S_j}$. Define
\[
  T_j(x)=\inf_{y\in S_j}\{T(y)+L_j|x-y|\}. 
\]
Then $T_j$ is Lipschitz on $\mathbb R$ and agrees with $T$ on $S_j$. We may similarly define
\[
  \Psi_j(t,h)=k_t h k_{T_j(t)}.
\]
This map is locally Lipschitz on $\mathbb R\times H$, and it agrees with $\Psi$ on $Q_j$. In particular, $\Psi_j|_{Q_j}$ is injective.

Next, we claim that for almost every $t\in S_j$, $T_j'(t)=T'(t)=p(t).$
Indeed, almost every point $t\in S_j$ is both a density point of $S_j$ and a differentiability point of $ T_j$. At such a point, choose distinct $t_n\in S_j$ with $t_n\to t$. Since $T_j=T$ on $S_j$, 
\[
\frac{T_j(t_n)- T_j(t)}{t_n-t}=\frac{T(t_n)-T(t)}{t_n-t}.
\]
As $T$ is differentiable at every point of $\Omega$, and therefore they are equal.

As each $Q_j$ is a Borel subset of the Polish manifold $\mathbb R\times H$, and $\Psi_j|_{Q_j}$ is continuous and injective. By Lusin's theorem $\Psi_j(Q_j)$ is also Borel in $G$. Note also that these images are pairwise disjoint. Indeed, $\Psi$ is injective, hence an equality between a point of $\Psi_i(Q_i)$ and a point of $\Psi_j(Q_j)$ would imply the equality of the two $t$-coordinates, which is impossible for $i\neq j$ since $S_i\cap S_j=\varnothing$ from Lemma~\ref{lem: monotone rearrangement}.

We can now apply the area  formula \cite[Section~3.3]{EG92} to $\Psi_j$ on $Q_j$. More precisely, $Q_j$ lies in a compact subset of $\mathbb R\times H$, hence it can be divided into finitely many Borel covers where the usual Euclidean area formula applies. The exceptional values of $t$ with $T_j'(t)\neq p(t)$ contribute zero product measure. Using injectivity, the Jacobian \eqref{eq:transported-diagonal-jacobian}, and the identity $\d\mu_H=\vartheta\d\nu_H$ from Corollary~\ref{cor:quotient density}, we obtain
\[
  \begin{aligned}
  \mu_G\bigl(\Psi_j(Q_j)\bigr)
  &=\int_{Q_j}
    \bigl(1+p(t)\vartheta(h)\bigr)\d t\d\nu_H(h)=\int_{S_j}\int_{E_t}
    \bigl(1+p(t)\vartheta(h)\bigr)\d\nu_H(h)\d t\\
  &=\int_{S_j}\bigl(\nu_H(E_t)+p(t)\mu_H(E_t)\bigr)\d t.
  \end{aligned}
\]
Finally, every $\Psi_j(Q_j)$ is contained in $XY$, and these sets are pairwise disjoint. Therefore
\[
  \mu_G(XY)\geq\sum_{j=1}^{\infty}\mu_G\bigl(\Psi_j(Q_j)\bigr)=\int_{\Omega}
    \bigl(\nu_H(E_t)+p(t)\mu_H(E_t)\bigr)\d t.
\]
as desired. 
\end{proof}

\section{Proof of the base case: $\widetilde{\PSL}_2(\R)$}

In this section, we prove the sharp Brunn--Minkowski inequality for the group $\widetilde{\PSL}_2(\R)$. Recall that $H$ is isomorphic to the $ax+b$ group, and we make use of the Brunn--Minkowski theorem for $H$ which is proven in~\cite{JTZ}.
\begin{fact}\label{fact: H}
    Let $X,Y\subseteq H$ be compact sets of positive measure. Then
\[
\left(\frac{\nu_H(X)}{\nu_H(XY)} \right)^{1/2} + \left(\frac{\mu_H(Y)}{\mu_H(XY)} \right)^{1/2} \leq 1. 
\]
\end{fact}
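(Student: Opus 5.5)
This is a direct consequence of the structural results recorded in Fact~\ref{fact:jtz}, and I would deduce it in two independent ways as a consistency check.

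\emph{First route.} Invoke Fact~\ref{fact:jtz}(1) directly. The group $H=AN$ is connected and solvable: it is the $ax+b$ group, with $A\simeq\R$ acting on $N\simeq\R$ by dilations. By Fact~\ref{fact: iwasawa}, $H$ is diffeomorphic to $\R^2$. Hence every compact subgroup of $H$ is trivial: a compact subgroup of a simply connected solvable Lie group is a compact subgroup of a group diffeomorphic to Euclidean space, hence trivial. Thus $\ndim(H)=\dim H=2$, and Fact~\ref{fact:jtz}(1) gives $\BM(2)$. Unwinding Definition~\ref{def:BM} with $r=2$ and $L=H$ yields exactly the displayed inequality.

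\emph{Second route.} To make the exponent $2$ transparent, and to avoid relying on the solvable case as a black box, I would also run the modular extension argument of Fact~\ref{fact:jtz}(3).
\begin{itemize}
\item[(i)] The modular function $\Delta_H$ is a continuous homomorphism $H\to\R^{>0}$. By Corollary~\ref{cor:quotient density} it equals $\vartheta$, and on $A$ it is $a\mapsto e^{\pm c\log a}$ with $c\neq0$, because $\mathrm{Ad}_a$ scales $\mathfrak n$ nontrivially.
\item[(ii)] It follows that $\Delta_H$ is surjective and open, hence a quotient map. Its kernel is exactly $N\simeq(\R,+)$.
\item[(iii)] The kernel $N$ satisfies $\BM(1)$: this is the classical one-dimensional Brunn--Minkowski inequality $\lambda(X+Y)\ge\lambda(X)+\lambda(Y)$ for compact $X,Y\subset\R$.
\item[(iv)] Fact~\ref{fact:jtz}(3), applied with $r=1$, now yields $\BM(2)$ for $H$.
\end{itemize}

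The only point requiring care is the bookkeeping of Haar measures. The inequality pairs the right Haar measure $\nu_H$ with the left factor $X$ and the left Haar measure $\mu_H$ with the right factor $Y$. This is precisely the convention of Definition~\ref{def:BM}, so no renormalization issue arises. In particular, the specific normalizations of $\nu_H$ and $\mu_H$ fixed in Fact~\ref{fact: iwasawa}(4) do not matter, since both ratios are scale invariant. I therefore expect no genuine obstacle here; the substantive work lies in combining this fact with Proposition~\ref{prop: transported diagonal} in the subsequent argument.
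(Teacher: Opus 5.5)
Your first route is what the paper does. It records this as a Fact and takes it from the solvable case of \cite{JTZ} (Fact~\ref{fact:jtz}(1)), using that $H=AN$ is connected and solvable with only trivial compact subgroups, so $\ndim(H)=2$. Your second route, the modular extension over $\ker\Delta_H=N\simeq\R$ combined with the classical one-dimensional inequality, is also correct, and it simply makes the source of the exponent $2$ explicit.
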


We are now ready to prove the main theorem of the section. 

\begin{theorem}\label{thm: PSL2}
Let $G=\widetilde{\PSL}_2(\mathbb R)$, let $\mu_G$ be a Haar measure on $G$, and let $X,Y\subseteq G$ be compact sets of positive measure. Then
\begin{equation}\label{eq: PSL2}
 \mu_G(XY)^{1/3}\geq \mu_G(X)^{1/3}+\mu_G(Y)^{1/3}.
\end{equation}
\end{theorem}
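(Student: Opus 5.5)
We combine Proposition~\ref{prop: transported diagonal} with Fact~\ref{fact: H} applied fiberwise, and then finish with a one-dimensional Hölder-type inequality.

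Fix $t\in\Omega$. Then $f(t)>0$ and $g(T(t))>0$, so $C_t$ and $D_{T(t)}$ are compact of positive measure. Fact~\ref{fact: H} applied to $C_t,D_{T(t)}$ gives
\[
\nu_H(C_t)^{1/2}\nu_H(E_t)^{-1/2}+\mu_H(D_{T(t)})^{1/2}\mu_H(E_t)^{-1/2}\le 1,
\]
that is, $f(t)^{1/2}a^{-1/2}+g(T(t))^{1/2}b^{-1/2}\le 1$ with $a=\nu_H(E_t)$ and $b=\mu_H(E_t)$. We want a lower bound for $a+p b$ with $p=p(t)$. Put $x=f(t)$ and $y=g(T(t))$. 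Minimizing $a+pb$ subject to $\sqrt{x/a}+\sqrt{y/b}\le1$ is a standard Lagrange computation. With $a=x/u^2$ and $b=y/(1-u)^2$, the minimum over $u\in(0,1)$ of $x/u^2+py/(1-u)^2$ equals
\[
\bigl(x^{1/3}+(py)^{1/3}\bigr)^3,
\]
by Hölder: $(u+(1-u))^{2/3}\bigl(\tfrac{x}{u^2}+\tfrac{py}{(1-u)^2}\bigr)^{1/3}\ge x^{1/3}+(py)^{1/3}$. Now insert the transport identity $py=\frac{\beta}{\alpha}x$. This gives
\[
\nu_H(E_t)+p(t)\mu_H(E_t)\ge f(t)\Bigl(1+(\beta/\alpha)^{1/3}\Bigr)^3 .
\]

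Integrate over $\Omega$. Since $\int_\Omega f=\alpha$, Proposition~\ref{prop: transported diagonal} yields
\[
\mu_G(XY)\ge \alpha\bigl(1+(\beta/\alpha)^{1/3}\bigr)^3=\bigl(\alpha^{1/3}+\beta^{1/3}\bigr)^3,
\]
which is \eqref{eq: PSL2}.

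The only step that needs care is measurability. We must check that $t\mapsto\nu_H(E_t)$ and $t\mapsto\mu_H(E_t)$ are measurable, so that the integral in Proposition~\ref{prop: transported diagonal} makes sense. This follows from the $\sigma$-compactness of the sets $Q_j$ constructed in that proof, via Fubini on each $Q_j$. We must also check that Fact~\ref{fact: H} applies on all of $\Omega$, which holds because $\Omega\subseteq\{f>0\}\cap T^{-1}\{g>0\}$ by construction. The real work was already done in Proposition~\ref{prop: transported diagonal}: injectivity and the Jacobian $1+p\vartheta$. The Hölder step is precisely the reason the exponents combine as $1/2$ fiberwise and $1$ along $K$ into $1/3$.
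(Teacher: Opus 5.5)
Your proof is correct and follows the paper's argument: you apply Fact~\ref{fact: H} fiberwise to $C_t$ and $D_{T(t)}$, use H\"older with exponents $3/2$ and $3$, insert the transport identity, and integrate against Proposition~\ref{prop: transported diagonal}. The paper writes the H\"older step directly as $f^{1/3}+(pg)^{1/3}\le(\nu_H(E_t)+p\mu_H(E_t))^{1/3}(r+s)^{2/3}$ rather than through your parametrization $a=x/u^2$. Your measurability remark is a harmless addition that the paper leaves implicit in the proof of that proposition.
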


\begin{proof}
Let us first use the inequality on $H$ (Fact~\ref{fact: H}) to obtain an estimate on every transported product fibre. 

Fix $t\in\Omega$. By the construction of $\Omega$, we have $f(t)>0$, $0<p(t)<\infty$, and
\begin{equation}
  p(t)g(T(t))=\frac{\beta}{\alpha}f(t).
  \label{eq:main-transport-identity}
\end{equation}
In particular, we have $g(T(t))>0$. Hence $C_t$ has positive right Haar measure and $D_{T(t)}$ has positive left Haar measure, and $E_t=C_tD_{T(t)}$ has positive measure for both Haar measures. 

Define the two ratios
\begin{equation}
  r(t)=\left(\frac{f(t)}{\nu_H(E_t)}\right)^{1/2},
  \qquad\text{and}\qquad
  s(t)=\left(\frac{g(T(t))}{\mu_H(E_t)}\right)^{1/2}.
  \label{eq:main-fibre-ratios}
\end{equation}
Applying Fact~\ref{fact: H} to $C_t$ and $D_{T(t)}$ gives
$r(t)+s(t)\leq 1$. By H\"older's inequality we have
\begin{align*}
    f(t)^{1/3}+\bigl(p(t)g(T(t))\bigr)^{1/3}
  &=\nu_H(E_t)^{1/3}r(t)^{2/3}
    +\bigl(p(t)\mu_H(E_t)\bigr)^{1/3}s(t)^{2/3}\\
  &\leq
  \bigl(\nu_H(E_t)+p(t)\mu_H(E_t)\bigr)^{1/3}
  \bigl(r(t)+s(t)\bigr)^{2/3}\\
  &\leq \bigl(\nu_H(E_t)+p(t)\mu_H(E_t)\bigr)^{1/3}.
\end{align*}
Substituting \eqref{eq:main-transport-identity} into the left-hand side and cubing the resulting inequality, we obtain for $t\in\Omega$ 
\begin{equation}
  \nu_H(E_t)+p(t)\mu_H(E_t)
  \geq f(t)\left(1+\left(\frac{\beta}{\alpha}\right)^{1/3}\right)^3
  \label{eq:main-pointwise-estimate}
\end{equation}
Finally, by Proposition~\ref{prop: transported diagonal}
\begin{align*}
  \mu_G(XY)
  &\geq\int_\Omega
    \bigl(\nu_H(E_t)+p(t)\mu_H(E_t)\bigr)\d t\\
  &\geq\left(1+\left(\frac{\beta}{\alpha}\right)^{1/3}\right)^3
    \int_\Omega f(t)\d t =\bigl(\alpha^{1/3}+\beta^{1/3}\bigr)^3.
\end{align*}
Recall that $\alpha=\mu_G(X)$ and $\beta=\mu_G(Y)$, this proves the theorem. 
\end{proof}

When $X=Y$, Theorem~\ref{thm: PSL2} shows $\mu_G(X^2)\geq 8\mu_G(X)$.  It is worth noting how much of \eqref{eq: PSL2} is new. For a noncompact semisimple Lie group $L$ and compact $X\subseteq L$, it was shown in~\cite[Corollary~1.6]{JTZ} that $\mu_L(X^2)\ge4\mu_L(X)$, and for $L=G$ this is the best one can get from the exponent $1/(\ndim(G)-\hdim(G))=1/2$ available there. On the other hand, the
constant $8$ cannot be improved, by~\cite[Theorem~1.3]{JTZ} applied with $\ndim(G)=3$. 


\section{The Hermitian contact parabolic factorization}
\label{sec:contact}

In this section, $G$ will be a simply connected noncompact simple Lie group of Hermitian type other than $\widetilde{\PSL}_2(\R)$. Equivalently, the Lie algebra $\mathfrak{g}$ of $G$ occurs in the list~\eqref{eq:hermitian-list} and it is not $\mathfrak{sp}_2$. We will attach to $G$ a parabolic subgroup $P$ with the following two properties: the group $G$ is the product of $P$ with a compact subgroup, and $P$ has the same noncompact Lie dimension as $G$. 

We use various definitions and facts about Lie groups from~\cite{Knapp} and~\cite{HilgertNeeb}. Fix a Cartan decomposition $\mathfrak{g}=\mathfrak{k}\oplus\mathfrak{s}$. Being of Hermitian type means precisely that the compactly embedded subalgebra $\mathfrak{k}$ has a one-dimensional center, so that
\begin{equation}\label{eq:k-c-z}
  \mathfrak{k}=\mathfrak{c}\oplus\R Z,\qquad
  \mathfrak{c}=[\mathfrak{k},\mathfrak{k}]
\end{equation}
with $\mathfrak{c}$ compact semisimple. See~\cite[Chapter~VIII]{Helgason}. We let $K$ and $C$ be the analytic subgroups of $G$ with Lie algebras $\mathfrak{k}$ and $\mathfrak{c}$ respectively.

The following lemma tells us that, although the maximal compact subgroup of $G$ becomes a noncompact group when we pass to the universal cover, the semisimple part $\mathfrak{c}$ of $\mathfrak{k}$ still integrates to a compact subgroup, and this subgroup is the one computing the noncompact Lie dimension of $G$. 

\begin{lemma} \label{lem:C-maxcompact}
Suppose $G$, $K$, and $C$ are as above. Then we have the following:
\begin{enumerate}[(1)]
    \item $C$ is compact and connected, and $K$ is isomorphic as a Lie group to $C\times\R$;
    \item $G/C$ is diffeomorphic to a Euclidean space, and in particular it is contractible;
    \item $C$ is a maximal compact subgroup of $G$, and hence
    \begin{equation}\label{eq:nG-dimC}
      \ndim(G)=\dim G-\dim C.
    \end{equation}
\end{enumerate}
\end{lemma}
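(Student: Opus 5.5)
I will organize the argument around Fact~\ref{fact:standard-structure}, applied first to $K$ and then to $G$.

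\emph{Step (1).} Since $\mathfrak k$ is compactly embedded, it is a compact Lie algebra. By \eqref{eq:k-c-z} it decomposes as $\mathfrak c\oplus\R Z$, with $\mathfrak c$ compact semisimple and $\R Z$ central.

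To apply Fact~\ref{fact:standard-structure}(1), I first need $K$ to be simply connected. For this I use the global Cartan decomposition of $G$ (Knapp, Theorem~6.31): the map $K\times\mathfrak s\to G$, $(k,X)\mapsto k\exp X$, is a diffeomorphism for every connected semisimple $G$, including those with infinite center. Hence $G\simeq K\times\mathfrak s$ as manifolds. Since $G$ is simply connected, so is $K$.

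Fact~\ref{fact:standard-structure}(1) then gives $K\simeq C\times\R^{\dim\mathfrak z}=C\times\R$, with $C$ the compact connected analytic subgroup for $\mathfrak c$. As a byproduct, the analytic subgroup $C$ is closed in $G$.

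\emph{Step (2).} I will compute $G/C$ using the same Cartan diffeomorphism. Because $C\le K$, we get
\[
G/C\simeq (K/C)\times\mathfrak s\simeq \R\times\mathfrak s,
\]
and this is a Euclidean space of dimension $1+\dim\mathfrak s$. To make this rigorous, I will check that right multiplication by $C$ is compatible with the product structure. Writing $g=k\exp X$, we have
\[
gc=kc\,\exp(\mathrm{Ad}_{c^{-1}}X).
\]
Modding out by $C$ therefore gives a smooth fibration $G/C\to K/C\simeq\R$ whose fibres are copies of $\mathfrak s$. This bundle over $\R$ is trivial, since it is a vector bundle over a contractible base, or alternatively because $K$ acts compatibly.

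A cleaner route is to use the diffeomorphism $\mathfrak s\times K\to G$, $(X,k)\mapsto \exp(X)k$, which is also a form of the Cartan decomposition. Under it, right $C$-multiplication acts only on the $K$-factor, so $G/C\simeq\mathfrak s\times K/C\simeq\mathfrak s\times\R$ directly. I will use this version.

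\emph{Step (3).} By Fact~\ref{fact:standard-structure}(2), $C$ is contained in some maximal compact subgroup $M$, and since $G/C$ is contractible, $C$ itself is maximal compact. The formula \eqref{eq:nG-dimC} then follows from \eqref{eq:noncompact-dim}. Here I use the fact that all maximal compact subgroups of a connected Lie group are conjugate (again Fact~\ref{fact:standard-structure}(2) and \cite{HilgertNeeb}), so the maximum of $\dim K'$ over compact $K'$ is attained by $C$.

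The main obstacle is Step (1), specifically justifying that $K$ is simply connected and closed. This requires citing the Cartan decomposition for groups with infinite center in the correct form. The remaining steps are formal consequences of the Facts.
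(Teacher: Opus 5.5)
Your proposal is correct and follows the paper's proof. A global diffeomorphism $G\cong K\times(\text{Euclidean factor})$ shows $K$ is simply connected, so Fact~\ref{fact:standard-structure}(1) gives $K\simeq C\times\R$; one-sided $C$-equivariance of that diffeomorphism then gives $G/C\simeq\R\times(\text{Euclidean factor})$, and Fact~\ref{fact:standard-structure}(2) finishes. The only difference is that you use the Cartan decomposition $\mathfrak s\times K\to G$ with right $C$-multiplication, where the paper uses the Iwasawa decomposition $K\times A\times N\to G$ with left $C$-multiplication followed by inversion; your appeal to conjugacy of maximal compact subgroups also spells out the step the paper calls ``direct from the definition.''
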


\begin{proof}
Let $G=KAN$ be an Iwasawa decomposition. Hence multiplication gives a diffeomorphism
\[
   K\times A\times N\to G,
\]
and $A \times N $ is contractible. Hence $K$ is homotopy equivalent to $G$. Since $G$ is simply connected, $K$ is simply connected. As we have
\[
   \mathfrak k=\mathfrak c\oplus\mathbb RZ,
\]
where $\mathfrak c$ is compact semisimple and $\mathbb RZ$ is central. Therefore by Fact \ref{fact:standard-structure}(1) we have
\[
   K\simeq C\times\mathbb R,
\]
where $C$ is compact and connected. This proves part~(1).

The Iwasawa diffeomorphism is equivariant under left multiplication by $C$. Consequently,
\[
   C\backslash G
   \simeq (C\backslash K)\times A\times N
   \simeq \mathbb R\times A\times N,
\]
which is a Euclidean space. Inversion $g\mapsto g^{-1}$ identifies $C\backslash G$ with $G/C$. Thus $G/C$ is Euclidean and in particular contractible. This proves part~(2).

Since $C$ is compact and $G/C$ is contractible, Fact \ref{fact:standard-structure}(2) shows that $C$ is a maximal compact subgroup of $G$. It follows directly from the definition of the noncompact Lie dimension that
\[
   n(G)=\dim G-\dim C.
\]
This proves part~(3).
\end{proof}

We now introduce the parabolic subgroups mentioned at the beginning of the section. Let $G_{\mathrm{ad}}$ be the adjoint group of $G$, and let $P_{\mathrm{ad}}$ be the contact parabolic subgroup of $G_{\mathrm{ad}}$, that is, the parabolic subgroup determined by the first Harish--Chandra strongly orthogonal root. Let $\pi: G\to G_{\mathrm{ad}}$ be the covering map, and set
\begin{equation}\label{eq:P-connected-lift}
  P=\bigl(\pi^{-1}(P_{\mathrm{ad}})\bigr)_0.
\end{equation}
Then $P$ is a connected closed subgroup of $G$ whose Lie algebra is the contact parabolic subalgebra $\mathfrak{p}$.

The next proposition is the main result of this section, and it is the structural reason that the three-dimensional base case controls all the Hermitian types.

\begin{proposition} \label{prop:contact-factorization}
Suppose $G$ is a simply connected noncompact simple Lie group of Hermitian type with $G\neq\widetilde{\PSL}_2(\R)$, $C$ is as in Lemma~\ref{lem:C-maxcompact}, the group $P$ is as in~\eqref{eq:P-connected-lift}, and $H=C\cap P$. Then we have the following:
\begin{enumerate}[(1)]
    \item $H$ is compact, $G=CP=PC$, and $\dim(P)<\dim(G)$.
    \item $H$ is a maximal compact subgroup of $P$, and
    \begin{equation}\label{eq:CP-and-dimension}
      \ndim(P)=\ndim(G).
    \end{equation}
\end{enumerate}
\end{proposition}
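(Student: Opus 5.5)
The plan is to work first in the adjoint group $G_{\mathrm{ad}}$ and then lift to $G$ along the covering $\pi$, keeping careful track of the central direction $\R Z$ of $\mathfrak k$.

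\emph{Step 1: the adjoint decomposition $G_{\mathrm{ad}}=K_{\mathrm{ad}}P_{\mathrm{ad}}$.} Since $P_{\mathrm{ad}}$ contains a minimal parabolic $M_{\mathrm{ad}}A N$, the Iwasawa decomposition gives $G_{\mathrm{ad}}=K_{\mathrm{ad}}AN\subseteq K_{\mathrm{ad}}P_{\mathrm{ad}}$. Hence the generalized flag variety $G_{\mathrm{ad}}/P_{\mathrm{ad}}$ is a compact homogeneous $K_{\mathrm{ad}}$-space $K_{\mathrm{ad}}/(K_{\mathrm{ad}}\cap P_{\mathrm{ad}})$.

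\emph{Step 2 (the key step, which I expect to be the main obstacle): the center $\R Z$ of $\mathfrak k$ is absorbed by $\mathfrak p$.} Concretely, I would show that
\[
\mathfrak k=\mathfrak c+(\mathfrak k\cap\mathfrak p),
\]
or equivalently that $\mathfrak c$ already acts transitively at the infinitesimal level on $K_{\mathrm{ad}}/(K_{\mathrm{ad}}\cap P_{\mathrm{ad}})$. It suffices to find an element of $\mathfrak k\cap\mathfrak p$ with nonzero $Z$-component.

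I would try this through the Harish--Chandra strongly orthogonal roots. Take a compact Cartan $\mathfrak t\subseteq\mathfrak k$, so that $Z\in\mathfrak t$, and let $\gamma_1$ be the first strongly orthogonal root. The Cayley transform of the $\mathfrak{sl}_2$-triple attached to $\gamma_1$ produces $\mathfrak a_1$ and hence the contact parabolic. The centralizer $\mathfrak m_1$ of $\mathfrak a_1$ in $\mathfrak k$ contains $\ker\gamma_1\cap\mathfrak t$. Because the rank of $\mathfrak t$ is at least $2$ once $\mathfrak g\neq\mathfrak{sl}_2(\R)$, the hyperplane $\ker\gamma_1$ is not contained in $\mathfrak t\cap\mathfrak c$, and so it contains a vector with nonzero $Z$-component. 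This is exactly where the hypothesis $G\neq\widetilde{\PSL}_2(\R)$ enters: for $\mathfrak{sl}_2(\R)$ one has $\mathfrak t=\R Z$, so $\ker\gamma_1=0$ and the argument fails, consistent with Section 4. If the rank argument turns out to be shaky for some type, I would fall back on the type-by-type appendix check.

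\emph{Step 3: lifting to $G$.} Granting Step 2, the map $C\times P\to G$, $(c,p)\mapsto cp$, has surjective differential at the identity, so $CP$ contains a neighborhood of $e$. Moreover $CP$ is closed, since $C$ is compact and $P$ is closed. To get $CP=G$, I would argue that $C$ acts transitively on $G/P$. Its orbit through $eP$ is open, because its dimension equals $\dim G/P$ by the same transversality after translating by $C$. The orbit is also compact, hence closed, and $G/P$ is connected because $G$ is. Therefore the orbit is all of $G/P$, which gives $G=CP$. Taking inverses then yields $G=PC$.

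The group $H=C\cap P$ is compact because it is closed in $C$. Finally, $\dim P<\dim G$ because $\mathfrak p$ is a proper parabolic subalgebra.

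\emph{Step 4: part (2).} Consider the $C$-equivariant diffeomorphism $C/H\cong G/P$ together with the fibration $P\to G\to G/P$. Since $G=PC$, we have
\[
P\backslash G\cong H\backslash C
\quad\text{and}\quad
G/C\cong P/H .
\]
Lemma~\ref{lem:C-maxcompact}(2) says $G/C$ is contractible, hence so is $P/H$. Fact~\ref{fact:standard-structure}(2) then shows that the compact subgroup $H$ is maximal compact in $P$. Therefore
\[
\ndim(P)=\dim P-\dim H=\dim P-(\dim C+\dim P-\dim G)=\dim G-\dim C=\ndim(G),
\]
where the middle equality is the dimension count from $G=CP$ (namely $\dim G=\dim C+\dim P-\dim H$), and the last equality is \eqref{eq:nG-dimC}.
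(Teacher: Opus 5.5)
Your Steps 1, 3 and 4 match the paper's proof: the $C$-orbit in $G/P$ is open and closed, and $P/H\cong G/C$ is contractible, so $H$ is maximal compact. The difference is Step 2. You want to prove $\mathfrak g=\mathfrak c+\mathfrak p$ uniformly. The paper instead obtains the equivalent identity $\dim C-\dim H=\dim G/P$ type by type from Zhang's contact-grading data in the Appendix, which is exactly your stated fallback.

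There is a genuine gap in the justification of Step 2. In $\mathfrak t=(\mathfrak t\cap\mathfrak c)\oplus\R Z$, both $\ker\gamma_1$ and $\mathfrak t\cap\mathfrak c$ are hyperplanes. So $\ker\gamma_1\not\subseteq\mathfrak t\cap\mathfrak c$ holds if and only if $\gamma_1|_{\mathfrak t\cap\mathfrak c}\neq0$, that is, if and only if $\gamma_1$ is not a multiple of the $Z$-coordinate functional. The bound $\dim\mathfrak t\geq2$ only gives $\mathfrak t\cap\mathfrak c\neq0$ and says nothing about this. Yet this is the entire content of the step, and it must use something specific to $\gamma_1$. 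For other parabolics, such as the isotropic-line parabolic of $\mathfrak{so}(2,n)$ mentioned in the Appendix, one has $\mathfrak c+\mathfrak p\neq\mathfrak g$.

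The missing input is that $\mathfrak s_{\mathbb C}=\mathfrak p^+\oplus\mathfrak p^-$, where $\mathfrak p^+$ is an irreducible $\mathfrak k_{\mathbb C}$-module with highest weight $\gamma_1$ and $Z$ acts on it by a scalar. Suppose $\gamma_1$ vanished on $\mathfrak t\cap\mathfrak c$.
\begin{itemize}
\item Then $\mathfrak p^+$ would be an irreducible $\mathfrak c_{\mathbb C}$-module of highest weight $0$, hence trivial.
\item By complex conjugation $\mathfrak p^-$ would be trivial too, so $[\mathfrak c,\mathfrak s]=0$.
\item Then $\mathfrak c$ would be an ideal of the simple algebra $\mathfrak g$, forcing $\mathfrak c=0$ and $\mathfrak g\cong\mathfrak{sl}_2(\R)$.
\end{itemize}
This is where $G\neq\widetilde{\PSL}_2(\R)$ genuinely enters.

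With that supplied, $\ker\gamma_1\cap\mathfrak t\subseteq\mathfrak m_1\subseteq\mathfrak k\cap\mathfrak p$ contains a vector with nonzero $Z$-component. Hence $\mathfrak k=\mathfrak c+(\mathfrak k\cap\mathfrak p)$, and together with $\mathfrak g=\mathfrak k+\mathfrak p$ from Step 1 this gives $\mathfrak g=\mathfrak c+\mathfrak p$. The rest of your argument then goes through. The resulting proof is classification-free and explains why the contact parabolic in particular works: its grading element's centralizer in $\mathfrak k$ has a component along $Z$. The paper's route instead produces the explicit dimension data recorded in its table.
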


\begin{proof}
We first prove (1). As $C$ and $P$ are closed subgroups of $G$ and $C$ is compact, $H=C\cap P$ is a closed subgroup of the compact group $C$, hence compact.

It remains to show that $G=CP=PC$. The contact grading classification of \cite[Lemma~1.1, (1.5)--(1.8)]{ZhangHeisenberg} determines the Lie algebras $\mathfrak{c}\cap\mathfrak{p}$ and $[\mathfrak{m},\mathfrak{m}]$ for every $\mathfrak{g}$ in~\eqref{eq:hermitian-list}. These are recorded type by type in Appendix~\ref{app:classification}, and the dimension computation gives us
\begin{equation}\label{eq:dimension-orbit}
  \dim C-\dim H=\dim G/P .
\end{equation}
Note that the assumption $G\neq\widetilde{\PSL}_2(\R)$ guarantees $\mathfrak{c}\neq 0$, so $P$ is a proper contact parabolic subgroup, and $\dim G/P>0$.  

Let $C$ act on the connected manifold $G/P$ by left translations, and consider the orbit map $\varphi: C\to G/P$ given by $c\mapsto c\cdot eP$. The image of the differential of $\varphi$ at the identity is $\mathfrak{c}\cdot eP\subseteq T_{eP}(G/P)$, which has dimension $\dim C-\dim H$. By~\eqref{eq:dimension-orbit}, the latter number is $\dim G/P=\dim T_{eP}(G/P)$, so $\d\varphi_e$ is surjective. Composing with left translations, we get that $\d\varphi_c$ is surjective for every $c\in C$. Hence $\varphi$ is a submersion, and its image which is the orbit $C\cdot eP$ is open in $G/P$. On the other hand, this orbit is the continuous image of the compact group $C$, hence it is also compact. Therefore it is closed in the Hausdorff space $G/P$. A nonempty subset of the connected manifold $G/P$ that is simultaneously open and closed must be all of $G/P$. Therefore $C\cdot eP=G/P$, which means that every coset $gP$ meets $C$, that is $G=CP$. Taking inverses we also get $G=PC$. This completes the proof of (1).

We now prove (2). Using (1), we have $G=PC$, so every coset in $G/C$ is of the form $pcC=pC$ with $p\in P$. Hence $P$ acts transitively on $G/C$ by left translations, and the stabilizer of $eC$ under this action is $\{p\in P:pC=C\}=P\cap C=H$. Therefore the induced map
$P/H\to G/C$ with $pH\mapsto pC$ is a $P$-equivariant diffeomorphism from $P/H$ onto $G/C$. By Lemma~\ref{lem:C-maxcompact}(2), the latter is diffeomorphic to a Euclidean space and in particular contractible. Consequently, $P/H$ is contractible.

Recall from (1) that $H$ is compact. By Fact~\ref{fact:standard-structure}(2) again we get that $H$ is a maximal compact subgroup of $P$. Combining this with~\eqref{eq:nG-dimC} and the diffeomorphism $P/H\simeq G/C$ obtained above, we have
\[  \ndim(P)=\dim P-\dim H=\dim(P/H)=\dim(G/C) =\dim G-\dim C=\ndim(G), 
\]
which proves (2).
\end{proof}

\begin{table}[t]
\centering
\caption{The contact parabolic factorization.}\label{tab:contact-summary}
\small
\renewcommand{\arraystretch}{1.18}
\begin{tabular}{@{}p{.20\textwidth}p{.18\textwidth}p{.16\textwidth}p{.15\textwidth}p{.19\textwidth}@{}}
\toprule
$\mathfrak g$ & $\mathfrak c$ & $\dim G/P$ & $\ndim(G)$
\\
\midrule
$\mathfrak{su}(p,q)$, \qquad $(p,q)\neq(1,1)$
& $\mathfrak{su}(p)+\mathfrak{su}(q)$
& $2(p+q)-3$ & $2pq+1$ \\
$\mathfrak{sp}_{2n}(\R)$, $n\geq2$
& $\mathfrak{su}(n)$
& $2n-1$ & $n^2+n+1$ \\
$\mathfrak{so}^*(2n)$, $n\geq3$
& $\mathfrak{su}(n)$
& $4n-7$ & $n^2-n+1$ \\
$\mathfrak{so}(2,n)$, $n\geq3$
& $\mathfrak{so}(n)$
& $2n-3$ & $2n+1$\\
$\mathfrak e_{6(-14)}$
& $\mathfrak{so}(10)$
& $21$ & $33$\\
$\mathfrak e_{7(-25)}$
& compact $\mathfrak e_6$
& $33$ & $55$ \\
\bottomrule
\end{tabular}
\end{table}

It is worth pointing out where the assumption $G\neq\widetilde{\PSL}_2(\R)$ is used. For $\mathfrak{g}=\mathfrak{sl}_2(\R)$, the subalgebra $\mathfrak{k}$ is abelian, so $\mathfrak{c}=0$ and the group $C$ is trivial. The contact parabolic subgroup then degenerates to the Borel subgroup $AN$, and both parts (1) and (2) of Proposition~\ref{prop:contact-factorization} fail: the  variety $G/AN$ is noncompact, and $\ndim(AN)=2<3=\ndim(G)$. This is the case handled separately by Theorem~\ref{thm: PSL2}.

\section{Proof of the main theorem}\label{sec:induction}

In this section, we prove the main theorem. 
The strategy is an induction on the dimension argument, with Theorem~\ref{thm: PSL2} as the base case and the contact parabolic obtained in Section~\ref{sec:contact} providing the induction step.  

We are now ready to prove the main induction result of the paper.

\begin{theorem}\label{thm:simple}
Suppose $G$ is a simply connected simple real Lie group, and $n=\ndim(G)$. Then $G$ satisfies $\BM(\ndim(G))$ for all compact $X,Y\subset G$ with positive measure. 
\end{theorem}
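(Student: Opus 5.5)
We argue by strong induction on $d=\dim G$, proving simultaneously that every simply connected simple Lie group $G$ of dimension $d$ satisfies $\BM(\ndim(G))$. Fix such a $G$ and assume the statement for all simply connected simple groups of dimension $<d$. By Fact~\ref{fact:jtz}(5), the induction hypothesis implies that every Lie group $L$ with $\dim L<d$ satisfies $\BM(\ndim(L))$. If the center of $G$ is finite, then the helix-free theorem of \cite{JTZ} already gives $\BM(\ndim(G))$, so we are done in that case. Otherwise $G$ is of Hermitian type, so $\mathfrak g$ is in the list~\eqref{eq:hermitian-list}. If $\mathfrak g\simeq\mathfrak{sl}_2(\R)$, then $G=\widetilde{\PSL}_2(\R)$. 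Here $G$ is unimodular and $\ndim(G)=3$ by Fact~\ref{fact: iwasawa}(3), so Theorem~\ref{thm: PSL2} is exactly $\BM(3)$. This case is also the base of the induction, since it is the smallest Hermitian case.

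For the remaining Hermitian $G$ we use Proposition~\ref{prop:contact-factorization}. It gives the contact parabolic $P$ and the compact connected group $C$ of Lemma~\ref{lem:C-maxcompact}, with $G=CP$, $C\cap P=H$ compact, $\dim P<\dim G$, and $\ndim(P)=\ndim(G)$. Because $\dim P<d$, the consequence of Fact~\ref{fact:jtz}(5) recorded above shows that $P$ satisfies $\BM(\ndim(P))=\BM(\ndim(G))$. We then apply the cocompact-factor principle, Fact~\ref{fact:jtz}(4), with $L=G$, $K=C$, and the factor $H$ there taken to be $P$. The hypotheses hold: $G$ is connected and unimodular because it is simple, $C$ is connected and unimodular because it is compact, $P$ is closed and connected by construction in \eqref{eq:P-connected-lift}, and $C\cap P$ is compact. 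The principle then yields that $G$ satisfies $\BM(\ndim(G))$, which closes the induction.

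The main obstacle is making sure the induction is legitimately set up. Fact~\ref{fact:jtz}(5) must be applied to $P$, which is neither simple nor unimodular, so we need its strong form: knowing $\BM(\ndim)$ for all simply connected simple groups of dimension at most $\dim P$ yields $\BM(\ndim(P))$. All such simple groups have dimension $\le \dim P<d$, so they are covered by the induction hypothesis. This covers the finite-center ones, which are handled by the helix-free theorem without any dimension restriction, as well as the Hermitian ones of smaller dimension, including $\widetilde{\PSL}_2(\R)$. A further point to check is that the nonunimodular form of $\BM$ for $P$ in Definition~\ref{def:BM} is exactly what Fact~\ref{fact:jtz}(4) takes as input; the fact is stated for a general closed connected factor, so this is consistent. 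Finally, the equality $\ndim(P)=\ndim(G)$ from Proposition~\ref{prop:contact-factorization}(2) is what makes the exponent sharp rather than lossy, and it relies on the dimension identity \eqref{eq:dimension-orbit} verified type by type in the appendix.
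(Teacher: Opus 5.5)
Your proposal is correct and follows the paper's proof essentially step for step. The paper also argues by induction on $\dim G$, uses the helix-free theorem when the center is finite and Theorem~\ref{thm: PSL2} for $\widetilde{\PSL}_2(\R)$, and otherwise takes the contact factorization $G=CP$ of Proposition~\ref{prop:contact-factorization}. It then gets $\BM(\ndim(P))$ from the induction hypothesis via the dimension-bounded form of Fact~\ref{fact:jtz}(5), and transfers this to $G$ with the cocompact-factor principle, Fact~\ref{fact:jtz}(4). Your remark that this strong form of Fact~\ref{fact:jtz}(5) is needed, because $P$ is neither simple nor unimodular, matches the paper's comment following Fact~\ref{fact:jtz}.
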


\begin{proof}
We do induction on $\dim(G)$. If the center of $G$ is finite, then $G$ satisfies $\BM(\ndim(G))$ by the helix-free theorem of \cite{JTZ}. Hence we may assume that the center of $G$ is infinite. Then $G$ is of Hermitian type, so $\mathfrak g$ occurs in the list \eqref{eq:hermitian-list}. When $G=\widetilde{\PSL}_2(\R)$, the result is proven in
Theorem~\ref{thm: PSL2} with $\ndim(G)=3$. We may now assume that $G$ is not $\widetilde{\PSL}_2(\R)$. 

Proposition~\ref{prop:contact-factorization} provides a connected closed proper parabolic subgroup $P$ and a connected compact subgroup $C$ such that
$G=CP$, $C\cap P$ is compact, $\ndim(P)=\ndim(G)$, and $\dim(P)<\dim(G)$. The induction hypothesis, together with the dimension reduction in Fact~\ref{fact:jtz}(5), shows that $P$ satisfies $\BM(\ndim(P))$

Finally, apply the cocompact factor principle, Fact~\ref{fact:jtz}(4). Here $G$ is connected and unimodular, $C$ is connected and unimodular, $P$ is connected and closed, and $C\cap P$ is compact. We conclude that $G$ satisfies $\BM(\ndim(P))=\BM(\ndim(G))$. This completes the induction.
\end{proof}

Theorem~\ref{thm:all-lc} can then be derived from Theorem~\ref{thm:simple} and the reduction theorem (Fact~\ref{fact:jtz}(5)). 

\section{Applications}

Since \cite{JTZ} there are several direct applications appears. For example, An, Zhang, and the authors proved a product set inverse theorem~\cite{AJTZ}, and Hrushovski and Falno~\cite{HruFal} and Falno~\cite{Falno} use \cite{JTZ} for studying metric approximate groups and hyperdefinable groups. In particular,  Falno~\cite[Proposition 2.13]{Falno} showed that 
\[
\mathrm{Lrank}(G)\leq 12 (\log k)^2. 
\]
A direct application of Theorem~\ref{thm:all-lc} is able to improve the bound to
\[
\mathrm{Lrank}(G)\leq\frac{\lfloor3\log_k\rfloor(\lfloor3\log_k\rfloor+1)}2. 
\]

Machado \cite{Machado} applies \cite{JTZ} to an Archimedean internal window of an approximate lattice. He obtains genuine arithmetic complexity estimates controlling noncompact Archimedean places and the degree of the defining number field.

In this section we record a few geometric applications of the main theorem. 

\subsection{Parallel volume on homogeneous quotients}
Let $G$ be a connected unimodular Lie group, let $K<G$ be compact, and write $\M=G/K$, $o=K$, and $\pi:G\to \M$. Choose a $G$-invariant Riemannian metric on $\M$. Its volume is the quotient of Haar measure under $\pi$ up to normalization.
For a set $\Omega\subseteq \M$, write
\[
 \Omega_r=\{z\in \M:d(z,\Omega)\le r\},
 \qquad B_r(o)=\{z:d(z,o)\le r\}.
\]

\begin{proposition}
\label{prop:quotientBM}
Assume that $\ndim(G)=\dim(G/K)=n>0.$
Then every compact $\Omega\subseteq \M$ and every $r>0$ satisfy
\[
 \vol_\M(\Omega_r)^{1/n}
 \geq \vol_\M(\Omega)^{1/n}+\vol_\M(B_r(o))^{1/n}.
\]
\end{proposition}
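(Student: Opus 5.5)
The plan is to lift everything to $G$ and apply Theorem~\ref{thm:all-lc} in its unimodular form \eqref{eq:unimodular-main}. Normalize $\mu_G$ so that $\vol_\M(E)=\mu_G(\pi^{-1}(E))$ for Borel $E\subseteq \M$; this is possible because $K$ is compact and $\vol_\M$ is the $G$-invariant pushforward of Haar measure. Set
\[
X=\pi^{-1}(\Omega),\qquad Y=\pi^{-1}(B_r(o)).
\]
Both sets are compact: $\pi$ is proper since $K$ is compact, and $B_r(o)$ is compact because a homogeneous Riemannian manifold is complete. Both are right $K$-invariant, $X$ has measure $\vol_\M(\Omega)$ and $Y$ has measure $\vol_\M(B_r(o))>0$. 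Since $\ndim(G)=n$ and $G$ is unimodular, \eqref{eq:unimodular-main} gives
\[
\mu_G(XY)^{1/n}\ge \mu_G(X)^{1/n}+\mu_G(Y)^{1/n}
\]
whenever $\mu_G(X)>0$. If $\vol_\M(\Omega)=0$, the claim follows trivially from $\Omega_r\supseteq B_r(z)$ for any $z\in\Omega$ together with the homogeneity of the metric.

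It remains to show $XY\subseteq \pi^{-1}(\Omega_r)$. Then monotonicity gives $\vol_\M(\Omega_r)=\mu_G(\pi^{-1}(\Omega_r))\ge\mu_G(XY)$, and the proposition follows. Take $x\in X$ and $y\in Y$. Then
\[
d(xy\cdot o,\ x\cdot o)=d(y\cdot o,\ o)\le r,
\]
because $G$ acts by isometries. Since $x\cdot o\in\Omega$, we get $xy\cdot o\in\Omega_r$. Note that the order matters: $X$ must sit on the left so that left translation by $x$ moves the ball around the point $x\cdot o$. No right $K$-invariance issue arises, since membership is determined by $\pi(xy)$ alone.

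The only real subtlety is the hypothesis $\ndim(G)=\dim(G/K)$, which is exactly what allows the exponent $n$ from Theorem~\ref{thm:all-lc} to match the geometric dimension; it is simply assumed here. The main point to double-check is therefore the measure normalization and the compactness and measurability of the sets involved, all of which are routine. The equality case and the choice of metric play no role.
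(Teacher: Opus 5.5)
Your proposal is correct and follows the paper's proof. You lift to $X=\pi^{-1}(\Omega)$ and $Y=\pi^{-1}(B_r(o))$, use the isometric action to place $XY$ over $\Omega_r$, and apply Theorem~\ref{thm:all-lc} with the normalization $\mu_G(\pi^{-1}E)=c_K\vol_\M(E)$. The only differences are minor: the paper proves the equality $XY=\pi^{-1}(\Omega_r)$, whereas your inclusion suffices. Your separate treatment of $\vol_\M(\Omega)=0$, where Theorem~\ref{thm:all-lc} does not apply directly, covers a small case the paper passes over.
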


\begin{proof}
Set $X=\pi^{-1}(\Omega)$, and $Y=\pi^{-1}(B_r(o)).$ The compactness of $K$ makes $\pi$ proper, so $X$ and $Y$ are compact. We claim that
\[
 XY=\pi^{-1}(\Omega_r).              
\]
Indeed, if $xK\in\Omega$ and $yK\in B_r(o)$, invariance of the metric gives
\[
 d(xyK,xK)=d(yK,o)\le r,
\]
and hence $xyK\in\Omega_r$.  Conversely, if $zK\in\Omega_r$, choose $xK\in\Omega$ with $d(zK,xK)\le r$. Then $y=x^{-1}z$ satisfies
$yK\in B_r(o)$ and $z=xy$. Weil's quotient formula supplies a constant $c_K>0$ such that
\[
 \mu_G(\pi^{-1}E)=c_K\vol_\M(E).
\]
Apply Theorem~\ref{thm:all-lc} to $X,Y$ and cancel $c_K^{1/n}$ get the desired result.
\end{proof}

Let us look at the following example. 
\begin{example}\label{eq: H2}
Let $G=\mathrm{PSL}_2(\R)$, $K=\mathrm{SO}(2)$ and
$\M=\mathbb H^2$ with curvature $-1$.  Since
\[
 \vol(B_t)=2\pi(\cosh t-1),
\]
the choice $\Omega=B_s(o)$ turns Proposition~\ref{prop:quotientBM} into
\[
(\cosh(s+r)-1)^{1/2}
 \geq (\cosh s-1)^{1/2}+(\cosh r-1)^{1/2}.
\]
The inequality is strict for $r,s>0$, but its ratio tends to one as $r,s\to 0$. 
\end{example}

In general, assume $\ndim (G)=\dim \M=n$.  For every $a,b>0$,
\[
  \lim_{t\to0}
  \frac{\vol_\M(B_{(a+b)t}(o))^{1/n}}
       {\vol_\M(B_{at}(o))^{1/n}
        +\vol_\M(B_{bt}(o))^{1/n}}
  =1.
\]
On the other hand, in real hyperbolic space, Erhard Schmidt~\cite{Schmidt49} proved the strictly stronger result that
\begin{equation}\label{eq:schmidt}
    \vol(\Omega_r) \geq \vol (B_{R+r})
\end{equation}
whenever $\vol(\Omega) = \vol(B_R)$. 

\subsection{Isoperimetry on symmetric spaces}

Let $\M=G/K$ be a symmetric space of noncompact type with its invariant Riemannian metric, and put $n=\dim M$.  We take $G$ to be the identity
component of the effective isometry group.  Then $G$ is semisimple, $K$ is maximal compact, and
\[
  \ndim(G)=\dim(G/K)=n.
\]
Throughout, $\omega_n$ denotes the Euclidean volume of the unit ball in $\mathbb R^n$.

\begin{proposition}[Cartan--Hadamard bound]
\label{prop :symmetric}
Let $\M$ be an $n$-dimensional symmetric space of noncompact type. For every nonempty finite-volume Borel set $E\subseteq \M$ and every $r>0$,
\begin{equation}
  \vol(E_r)
  \ge
  \left(\vol(E)^{1/n}+\omega_n^{1/n}r\right)^n.
  \label{eq:euclidean-parallel}
\end{equation}
Every finite-volume set $E\subseteq \M$ of finite perimeter satisfies
\begin{equation}
  \Per(E)
  \ge
  n\omega_n^{1/n}\vol(E)^{(n-1)/n}.
  \label{eq:numerical-CH}
\end{equation}
\end{proposition}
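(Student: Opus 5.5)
The plan is to deduce \eqref{eq:euclidean-parallel} from Proposition~\ref{prop:quotientBM} by letting the ball radius shrink and iterating, and then to read off \eqref{eq:numerical-CH} as the derivative at $r=0$. First reduce to compact $E$. Parallel sets only grow under inclusion, and inner regularity gives compact $E'\subseteq E$ with $\vol(E')\to\vol(E)$. If $\vol(E)=0$, apply the argument to a single point, whose $r$-neighbourhood is $B_r$, and use $\vol(B_r)\ge\omega_n r^n$ (Günther--Bishop comparison in nonpositive curvature). So assume $E$ is compact.

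Since $G$ is semisimple with $K$ maximal compact, we have $\ndim(G)=n$, so Proposition~\ref{prop:quotientBM} applies. It gives
\[
\vol(E_\rho)^{1/n}\ge\vol(E)^{1/n}+\vol(B_\rho(o))^{1/n}
\]
for every compact $E$ and every $\rho>0$. Note that $E_\rho$ is again compact, because $\M$ is complete and proper.

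Fix $r>0$, an integer $m$, and set $\rho=r/m$. In a length space the parallel sets compose: $(E_a)_b=E_{a+b}$. This holds because $\M$ is geodesic, so a point at distance at most $a+b$ from $E$ can be reached through an intermediate point at distance at most $a$ from $E$. Iterating the displayed inequality $m$ times therefore gives
\[
\vol(E_r)^{1/n}\ge\vol(E)^{1/n}+m\,\vol(B_{r/m}(o))^{1/n}.
\]
As $m\to\infty$, the small-ball asymptotics on a Riemannian manifold give $\vol(B_\rho)=\omega_n\rho^n(1+O(\rho^2))$. Hence $m\,\vol(B_{r/m})^{1/n}\to\omega_n^{1/n}r$, and \eqref{eq:euclidean-parallel} follows. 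This iteration is the key point: one application of Proposition~\ref{prop:quotientBM} only yields the weaker term $\vol(B_r)^{1/n}$ for large $r$, and subdividing converts the infinitesimal Euclidean behaviour of small balls into the sharp linear term.

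For \eqref{eq:numerical-CH}, use the outer Minkowski content. For a compact set with smooth (or rectifiable) boundary, $\Per(E)=\lim_{r\to0}(\vol(E_r)-\vol(E))/r$. Expanding \eqref{eq:euclidean-parallel} to first order in $r$ gives
\[
\liminf_{r\to0}\frac{\vol(E_r)-\vol(E)}{r}\ge n\,\omega_n^{1/n}\vol(E)^{(n-1)/n}.
\]
For a general finite-perimeter set of finite volume, approximate $E$ by bounded smooth sets $E_k$ with $\vol(E_k)\to\vol(E)$ and $\Per(E_k)\to\Per(E)$. Such sets exist by the standard Riemannian approximation theorem (mollify $\mathbf 1_E$ and take superlevel sets via the coarea formula, after truncating to a large ball, which costs arbitrarily little perimeter since $\vol(E)<\infty$).

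I expect this approximation step to be the main obstacle. It needs the identification of perimeter with Minkowski content for smooth compact sets, and a truncation argument that keeps the added perimeter small. The lower bound on perimeter then follows by passing to the limit.
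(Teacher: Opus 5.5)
Your proof is correct. The part for \eqref{eq:numerical-CH} is essentially the paper's argument: a first-order tube expansion for smooth relatively compact domains, then truncation by large balls via coarea and strict BV approximation. Your route to \eqref{eq:euclidean-parallel}, however, is genuinely different.

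The paper applies Proposition~\ref{prop:quotientBM} once at radius $r$. It then inserts the radial Jacobi field (G\"unther--Bishop) comparison $\vol(B_r(o))\ge\omega_n r^n$, which uses that $\M$ is a Cartan--Hadamard manifold. You instead apply Proposition~\ref{prop:quotientBM} $m$ times at radius $r/m$ and let $m\to\infty$. The only geometric input is then the small-ball asymptotics $\vol(B_\rho)=\omega_n\rho^n(1+O(\rho^2))$, which hold on any Riemannian manifold.

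The paper's argument is a one-liner once the comparison theorem is granted. Yours uses no curvature hypothesis. It therefore proves \eqref{eq:euclidean-parallel}, and hence \eqref{eq:numerical-CH}, on every $G/K$ covered by Proposition~\ref{prop:quotientBM}, namely connected unimodular $G$, compact $K$, and $\ndim(G)=\dim(G/K)$. Examples are simply connected nilpotent Lie groups or $\widetilde{\PSL}_2(\R)$ with left-invariant metrics, which need not be nonpositively curved.

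Taking $E=\{o\}$, your iteration even \emph{recovers} $\vol(B_r(o))\ge\omega_n r^n$. The first step $\vol(E_\rho)=\vol(B_\rho(o))$ is trivial, and all later sets have positive measure. So your appeal to G\"unther--Bishop in the case $\vol(E)=0$ is unnecessary.

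Two minor points:
\begin{enumerate}
\item The iteration only uses the inclusion $(E_a)_b\subseteq E_{a+b}$. This is the triangle inequality and does not need the geodesic property.
\item Your reduction from Borel to compact $E$ by inner regularity (with $E_r$ closed, hence measurable) supplies a step the paper leaves implicit. This matters because Proposition~\ref{prop:quotientBM} is stated only for compact sets.
\end{enumerate}
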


\begin{proof}
Apply Proposition~\ref{prop:quotientBM}. The radial Jacobi field comparison on a Cartan--Hadamard manifold gives
\[
  \vol(B_r(o))\ge\omega_n r^n,
\]
and substitution proves \eqref{eq:euclidean-parallel}.

First suppose that $\Omega$ is a relatively compact smooth domain. As $r\to 0$, the outer tube expansion is
\[
  \vol(\Omega_r)=\vol(\Omega)+r\Per(\Omega)+o(r).
\]
On the other hand, \eqref{eq:euclidean-parallel} gives
\[
  \frac{\vol(\Omega_r)-\vol(\Omega)}{r}
  \geq \frac{\bigl(\vol(\Omega)^{1/n}+\omega_n^{1/n}r\bigr)^n -\vol(\Omega)}{r}.
\]
Letting $r\to0$ yields
\[
  \Per(\Omega)\geq n\omega_n^{1/n}\vol(\Omega)^{(n-1)/n}.
\]

We record the approximation needed for a general finite-perimeter set. Choose a base point $o$.  The coarea formula supplies radii $R_j\to\infty$ such that the boundary contribution of $E\cap B_{R_j}(o)$ tends to zero.  More explicitly, for almost every $R$, the BV product rule gives
\[
  \Per(E\cap B_R)
  \le
  \Per(E;B_R)
  +\mathcal H^{n-1}(E^{(1)}\cap\partial B_R),
\]
and one may choose $R_j$ so that the last term tends to zero. Since $\chi_{E\cap B_{R_j}}\to\chi_E$ in $L^1$, lower semicontinuity and the displayed upper bound imply
\[
  \Per(E\cap B_{R_j})\to\Per(E).
\]
Local convolution in finitely many coordinate charts and a diagonal argument then produce relatively compact smooth domains $\Omega_j$ such that
\[
  \chi_{\Omega_j}\to\chi_E\text{ in }L^1(\M),
  \qquad\text{and}\qquad
  \Per(\Omega_j)\to\Per(E).
\]
This is the standard strict approximation theorem for BV functions on Riemannian manifolds. Applying the smooth inequality to $\Omega_j$ and passing to the limit proves \eqref{eq:numerical-CH} as desired.
\end{proof}

We remark that the above result is numerical, in the sense that it proves the Euclidean lower bound with its optimal homogeneous coefficient, but it does not identify isoperimetric regions, prove that geodesic balls minimize at a
fixed positive volume, or classify equality. On the other hand the result does not require any regularity assumptions. 

Let us now discuss the optimality.
\begin{proposition}[Optimal homogeneous coefficient]\label{prop:perimeter-sharp}
For every symmetric space $\M$ of noncompact type,
\[
  \inf_{0<\vol(E)<\infty}
  \frac{\Per(E)}{\vol(E)^{(n-1)/n}}  =n\omega_n^{1/n}.
\]
\end{proposition}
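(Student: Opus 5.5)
The lower bound $\ge n\omega_n^{1/n}$ is exactly \eqref{eq:numerical-CH} of Proposition~\ref{prop :symmetric}, applied to every finite-volume set $E$ of finite perimeter; sets of infinite perimeter contribute $+\infty$ to the infimum and are harmless. So the plan reduces to proving the reverse inequality $\inf\le n\omega_n^{1/n}$. For this I would exhibit a family of sets whose isoperimetric ratio tends to the Euclidean value. The natural choice is small geodesic balls $B_\varepsilon(o)$ as $\varepsilon\to0$.

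For the upper bound I would use normal coordinates at $o$. Since $\M$ is a smooth Riemannian manifold, the metric satisfies $g_{ij}=\delta_{ij}+O(|x|^2)$ in these coordinates, so the volume density is $1+O(|x|^2)$. This gives
\[
\vol(B_\varepsilon(o))=\omega_n\varepsilon^n\bigl(1+O(\varepsilon^2)\bigr),
\qquad
\Per(B_\varepsilon(o))=\mathcal H^{n-1}(\partial B_\varepsilon(o))=n\omega_n\varepsilon^{n-1}\bigl(1+O(\varepsilon^2)\bigr).
\]
Equivalently, one can use the explicit radial Jacobian on a symmetric space, $\prod_{\alpha}\bigl(\sinh(\alpha(H)r)/\alpha(H)\bigr)^{m_\alpha}$ with $|H|=1$, which is $r^{n-1}(1+O(r^2))$. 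Dividing the two expansions,
\[
\frac{\Per(B_\varepsilon)}{\vol(B_\varepsilon)^{(n-1)/n}}
=\frac{n\omega_n\varepsilon^{n-1}(1+O(\varepsilon^2))}{\omega_n^{(n-1)/n}\varepsilon^{n-1}(1+O(\varepsilon^2))}
\longrightarrow n\omega_n^{1/n}.
\]

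Balls are smooth for $\varepsilon$ below the injectivity radius, which is infinite here since $\M$ is Cartan--Hadamard. Their perimeter therefore equals the Riemannian area of the boundary sphere, and they have finite positive volume. Combining the two bounds gives equality.

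The only mildly delicate step is justifying the second-order expansions of volume and area. These are standard (Bertrand--Diguet--Puiseux type expansions), and I would simply cite them or derive them from the Jacobi field expansion $J(r)=r\,E(r)+O(r^3)$. I expect no real obstacle. The point to stress is that the infimum is approached only in the limit and is not attained, since by the strict Schmidt-type behaviour in negative curvature the ratio is strictly larger for every fixed ball.
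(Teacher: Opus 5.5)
Your proposal is correct and is essentially the paper's proof: the lower bound is \eqref{eq:numerical-CH}, and the upper bound comes from small geodesic balls. Your $1+O(\varepsilon^2)$ expansions in normal coordinates already suffice; the paper keeps the scalar-curvature terms only to observe that the ratio approaches $n\omega_n^{1/n}$ from above.

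Two side remarks should be fixed or dropped. First, your explicit radial density is missing the factor $r^{\ell-1}$ from the flat directions, where $\ell$ is the rank. Second, the closing claim that the infimum is never attained is not justified: strictness for balls says nothing about arbitrary sets, in higher rank $\M$ is only nonpositively curved, and it is not known that balls are isoperimetric. That is why the paper makes no such claim.
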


\begin{proof}
The lower bound is \eqref{eq:numerical-CH}.  Since $\M$ is locally symmetric, the small geodesic balls have expansions
\[
\begin{aligned}
  \vol(B_\varepsilon)
  &= \omega_n\varepsilon^n
  \left(  1-\frac{\mathrm{Scal}_\M}{6(n+2)}\varepsilon^2   +O(\varepsilon^4) \right),\\
  \Per(B_\varepsilon)
  &= n\omega_n\varepsilon^{n-1}
  \left(1-\frac{\mathrm{Scal}_\M}{6n}\varepsilon^2
    +O(\varepsilon^4) \right).
\end{aligned}
\]
Therefore
\begin{equation}
  \frac{\Per(B_\varepsilon)}
       {\vol(B_\varepsilon)^{(n-1)/n}}
  =
  n\omega_n^{1/n}
  \left(
    1-\frac{\mathrm{Scal}_\M}{2n(n+2)}\varepsilon^2
    +O(\varepsilon^4)
  \right).
  \label{eq:small-ball-ratio}
\end{equation}
The ratio tends to $n\omega_n^{1/n}$.  On a nonflat irreducible noncompact symmetric space, $\mathrm{Scal}_\M<0$, so it approaches the Euclidean constant from above.
\end{proof}

One should notice that at large volume, the above lower bound is not the correct scale. Wang computed the Cheeger constant $h(\M)$ of every symmetric space of noncompact type \cite{Wang2015}, and in restricted root notation it is the volume entropy, equivalently the norm of the mean-curvature vector of the horospheres. Hence
\[
  \Per(E)\geq h(M)\vol(E).
\]
The useful two-scale summary is
\begin{equation}
  \Per(E)\geq \max\left\{ n\omega_n^{1/n}\vol(E)^{(n-1)/n},  h(M)\vol(E)
  \right\}.
  \label{eq:two-scale}
\end{equation}
The first term has the optimal local coefficient and the second has the correct large volume homogeneity.  Their maximum still does not determine the intermediate volume behaviors.

Finally let us mentioned a few related work in the literature.
The Cartan--Hadamard inequality for arbitrary Cartan--Hadamard manifolds was established in dimension two by Weil~\cite{Weil1926}, dimension three by Kleiner~\cite{Kleiner1992}, and dimension four by Croke~\cite{Croke1984}. Ros's product theorem~\cite{Ros2005} shows that Euclidean domination is preserved under Cartesian products, so reducible examples assembled from already settled factors are not new at the level of \eqref{eq:numerical-CH}.

Nardulli and Osorio Acevedo \cite{Nardulli} proved sharp small volume inequalities on complete manifolds of strong bounded geometry, including a scalar curvature correction. Symmetric spaces meets their hypotheses. Silini \cite{Silini2025} proved the small volume ball result in every rank-one noncompact symmetric space, with quantitative stability. Thus the point of Proposition~\ref{prop :symmetric} in the unresolved higher-dimensional classes is its all volume nature, not its behavior near zero volume.

A very recent work of Ghomi prove the result under large-nullity and curvature-pinching hypotheses \cite{Ghomi2026Nullity,Ghomi2026Pinched}; these do not cover general irreducible higher rank symmetric spaces, which contain zero curvature planes. Li, Lin, and Xu \cite{LiLinXu2026} obtain a diameter-dependent isoperimetric estimate, with an exponential loss, for nonpositively curved symmetric spaces in ambient dimension at most seven. That result does not yield the diameter-free Euclidean constant in \eqref{eq:numerical-CH}.

One should note that for real hyperbolic space, Schmidt's theorem recalled in \eqref{eq:schmidt} is stronger at every positive scale.  In the other rank-one spaces, exact unrestricted profiles are much subtler.  Silini's work gives small volume or symmetry restricted results \cite{SiliniHopf,Silini2025}.

For genuine quaternionic hyperbolic spaces $\mathbf H_{\mathbb H}^{m}$, $m\ge2$, the Cayley plane $\mathbf H_{\mathbb O}^{2}$, and irreducible higher rank spaces such as $\mathrm{SL}_3(\R)/\mathrm{SO}(3)$, Proposition~\ref{prop :symmetric} is new.

\subsection{Analytic inequalities}

We record some analytic consequences. The rearrangement arguments used in the section are classical and the input is the numerical isoperimetric inequality.

Let $\M^n$ be a complete infinite volume Riemannian manifold satisfying
\begin{equation}
  \Per(E)\geq n\omega_n^{1/n} \vol(E)^{(n-1)/n},
  \label{eq:abstract-iso}
\end{equation}
for every bounded smooth set $E$.  By strict BV approximation it then holds for every finite-volume finite-perimeter set.

For a measurable function $u$ vanishing at infinity, let $u^*$ be the Euclidean symmetric decreasing rearrangement of $|u|$.  Thus $u^*$ is radial and nonincreasing on $\R^n$, and for $t>0$
\[
  |\{u^*>t\}| =\vol(\{|u|>t\}).
\]

\begin{proposition}[Euclidean P\'olya--Szeg\H{o} transfer]
\label{prop:PS}
Assume \eqref{eq:abstract-iso}. For every $u\in C_c^\infty(\M)$ and $1\le p<\infty$,
\begin{equation}
  \int_{\R^n}|\nabla u^*|^p\d x
  \leq \int_\M|\nabla u|^p\d\vol.
  \label{eq:PS}
\end{equation}
Moreover, for $1\le q\le\infty$,
\[
  \|u^*\|_{L^q(\R^n)}=\|u\|_{L^q(M)}.
\]
The $p=1$ statement extends to compactly supported BV functions, with the gradient replaced by total variation.
\end{proposition}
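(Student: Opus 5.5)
The plan is the classical level-set (coarea) argument of P\'olya--Szeg\H{o}, with the Euclidean isoperimetric inequality replaced by \eqref{eq:abstract-iso}.

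Fix $u\in C_c^\infty(\M)$ and replace $u$ by $|u|$. This does not change the rearrangement or any $L^q$ norm, and $|\nabla|u||=|\nabla u|$ almost everywhere. Set $\mu(t)=\vol(\{|u|>t\})$.

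\textbf{Step 1: equimeasurability.} The $L^q$ identity follows from the layer-cake formula, since $\{u^*>t\}$ and $\{|u|>t\}$ have the same measure for every $t>0$. The case $q=\infty$ holds because $\|\cdot\|_\infty$ is the supremum of those $t$ with $\mu(t)>0$.

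\textbf{Step 2: the case $p=1$.} By the coarea formula,
\[
\int_\M|\nabla u|\d\vol=\int_0^\infty \Per(\{|u|>t\})\d t .
\]
Apply \eqref{eq:abstract-iso} to each superlevel set. These sets are bounded with finite perimeter, and for almost every $t$ they are smooth by Sard's theorem. This gives $\Per(\{|u|>t\})\ge n\omega_n^{1/n}\mu(t)^{(n-1)/n}$. The right-hand side is exactly the perimeter of the Euclidean ball $\{u^*>t\}$. The coarea formula on $\R^n$ then yields \eqref{eq:PS} for $p=1$. For compactly supported BV functions, the same computation works using the BV coarea formula, with \eqref{eq:abstract-iso} extended to finite-perimeter sets by strict approximation.

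\textbf{Step 3: the case $p>1$.} Write $E_t=\{|u|>t\}$. For almost every regular value $t$, the coarea formula gives
\[
-\mu'(t)\ge\int_{\partial E_t}|\nabla u|^{-1}\d\mathcal H^{n-1}.
\]
Apply H\"older's inequality on the level set $\partial E_t=\{|u|=t\}$:
\[
\Per(E_t)\le\Bigl(\int_{\partial E_t}|\nabla u|^{p-1}\Bigr)^{1/p}\Bigl(\int_{\partial E_t}|\nabla u|^{-1}\Bigr)^{(p-1)/p}.
\]
This gives
\[
\int_{\partial E_t}|\nabla u|^{p-1}\ge \Per(E_t)^p(-\mu'(t))^{1-p}\ge \bigl(n\omega_n^{1/n}\mu(t)^{(n-1)/n}\bigr)^p(-\mu'(t))^{1-p}.
\]
Integrating in $t$ and using the coarea formula gives a lower bound for $\int_\M|\nabla u|^p$. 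On the Euclidean side, $u^*$ is radial with $|\nabla u^*|$ constant on each sphere $\{u^*=t\}$. The same chain of inequalities is therefore an equality almost everywhere for $u^*$, except for a possible singular part of $\mu$. Such a singular part only contributes to the manifold side. Hence $\int_{\R^n}|\nabla u^*|^p$ equals
\[
\int_0^\infty \bigl(n\omega_n^{1/n}\mu^{(n-1)/n}\bigr)^p(-\mu')^{1-p}\d t,
\]
and this is at most $\int_\M|\nabla u|^p$.

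\textbf{Main obstacle.} The delicate step is making Step 3 rigorous. This involves:
\begin{enumerate}[(i)]
\item the set of critical values, which is null by Sard's theorem;
\item the possible jump or singular behavior of the monotone function $\mu$;
\item verifying that $u^*$ is locally Lipschitz, so that its gradient integral is given by this formula.
\end{enumerate}
I would handle these as in the standard Talenti or Brothers--Ziemer treatment. First, the set $\{\nabla u=0\}\cap\{0<|u|\}$ only affects the absolutely continuous part of $\mu$ in the favorable direction. Second, $u^*$ is the composition of the inverse distribution function with $\omega_n|x|^n$, and it is Lipschitz because the isoperimetric bound prevents $\mu$ from being flat where $u$ is nonconstant. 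Alternatively, for $p>1$ one can reduce to $p=1$ by applying Step 2 to $\min\{(|u|-t)_+,h\}/h$ and letting $h\to0$, which gives the level-wise perimeter inequality directly.
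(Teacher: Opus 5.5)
Your proposal is correct and follows essentially the same route as the paper: coarea on the level sets $E_t$, H\"older on $\partial E_t$ combined with \eqref{eq:abstract-iso}, equality for the radial rearrangement $u^*$, layer-cake for the $L^q$ identities, and approximation for the BV case. You are more careful than the paper's sketch on the technical points, with two small corrections of phrasing: the singular part of $\mu$ is the same for $u$ and $u^*$ (they share the distribution function) and it never enters either side, since only the a.e.\ derivative $-\mu'$ appears; and Lipschitz continuity of $u^*$ comes from the quantitative bound $-\mu'(t)\ge \Per(E_t)/\|\nabla u\|_\infty\ge n\omega_n^{1/n}\mu(t)^{(n-1)/n}/\|\nabla u\|_\infty$, whereas mere non-flatness of $\mu$ only gives continuity of $u^*$.
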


\begin{proof}
Replace $u$ by $|u|$, and set $E_t=\{u>t\}$ be the level set, and $ m(t)=\vol(E_t)$.  For almost every $t>0$, the value $t$ is regular and the coarea formula gives
\begin{equation}
  -m'(t) =
  \int_{\partial E_t}\frac{1}{|\nabla u|}\d A.
  \label{eq:distribution-derivative}
\end{equation}
If $p>1$, H\"older's inequality on $\partial E_t$ gives
\[
  \Per(E_t) \leq \left(
    \int_{\partial E_t}|\nabla u|^{p-1}\d A \right)^{1/p} \left( \int_{\partial E_t}\frac{1}{|\nabla u|}\d A\right)^{(p-1)/p}.
\]
Using \eqref{eq:distribution-derivative} and
\eqref{eq:abstract-iso}, we obtain
\begin{equation}
  \int_{\partial E_t}|\nabla u|^{p-1}\d A
  \geq \frac{\Per(E_t)^p}{(-m'(t))^{p-1}}
  \geq  \frac{(n\omega_n^{1/n} m(t)^{(n-1)/n})^p}{(-m'(t))^{p-1}}.
  \label{eq:level-lower}
\end{equation}
The level set $\{u^*>t\}$ is a Euclidean ball of volume $m(t)$. Its perimeter is exactly $n\omega_n^{1/n} m(t)^{(n-1)/n}$, and $|\nabla u^*|$ is constant almost everywhere along its boundary. Thus equality holds in the Euclidean analogue of the right side of \eqref{eq:level-lower}. Integrating in $t$ and using coarea again,
\begin{align*}
  \int_\M|\nabla u|^p\d\vol=
  \int_0^\infty
  \int_{\partial E_t}|\nabla u|^{p-1}\d A\d t \geq
  \int_{\R^n}|\nabla u^*|^p\d x.
\end{align*}

Note that for $p=1$, coarea directly gives
\[
  \int_\M|\nabla u|\d\vol= \int_0^\infty\Per(E_t)\d t \geq \int_{\R^n}|\nabla u^*|\d x.
\]
Plateau values may be handled by regularization of the distribution function. Their interiors contribute no gradient. Equimeasurability and the layer-cake formula give the $L^q$ identities. Strict smooth approximation and lower semicontinuity give the BV extension.
\end{proof}

This implication from isoperimetry to P\'olya--Szeg\H{o} is classical. The proof is included to make the hypotheses and constants explicit. We have the following corollary for Sobolev constants. 

\begin{corollary}
\label{cor:sobolev}
Let $n\geq2$.  Every $u\in C_c^\infty(\M)$ satisfies
\begin{equation}
  \|u\|_{L^{n/(n-1)}(\M)}
  \le
  \frac{1}{n\omega_n^{1/n}}
  \|\nabla u\|_{L^1(\M)}.
  \label{eq:L1-sobolev}
\end{equation}
If $1<p<n$, $p'=p/(p-1)$, and $p^*=np/(n-p)$, then
\begin{equation}
  \|u\|_{L^{p^*}(\M)}
  \le
  S_{n,p}\|\nabla u\|_{L^p(\M)},
  \label{eq:p-sobolev}
\end{equation}
where the Euclidean Aubin--Talenti constant is
\[
  S_{n,p}
  =
  \pi^{-1/2}n^{-1/p}
  \left(\frac{p-1}{n-p}\right)^{1/p'}
  \left[
    \frac{\Gamma(1+n/2)\Gamma(n)}
         {\Gamma(n/p)\Gamma(1+n/p')}
  \right]^{1/n}.
\]
\end{corollary}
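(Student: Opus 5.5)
Both inequalities follow from Proposition~\ref{prop :symmetric} by classical symmetrization. It shows that $\M$ satisfies the Euclidean isoperimetric inequality \eqref{eq:abstract-iso} with the optimal constant $n\omega_n^{1/n}$. Since $\M$ is a symmetric space of noncompact type, it is complete with infinite volume, so Proposition~\ref{prop:PS} is available. Given $u\in C_c^\infty(\M)$, let $u^*$ be the Euclidean symmetric decreasing rearrangement of $|u|$. It is compactly supported, radial, nonincreasing and Lipschitz. Proposition~\ref{prop:PS} then gives
\[
\|u^*\|_{L^q(\R^n)}=\|u\|_{L^q(\M)}\quad (1\le q\le\infty),
\qquad
\|\nabla u^*\|_{L^p(\R^n)}\le\|\nabla u\|_{L^p(\M)}\quad(1\le p<\infty).
\]
Each Sobolev inequality on $\M$ will therefore reduce to the same inequality for $u^*$ on $\R^n$.

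For \eqref{eq:L1-sobolev} we use the sharp Euclidean $L^1$ Sobolev inequality (Federer--Fleming, Maz'ya), valid for compactly supported Lipschitz or BV functions:
\[
\|v\|_{L^{n/(n-1)}(\R^n)}\le \frac{1}{n\omega_n^{1/n}}\,|Dv|(\R^n).
\]
Apply it to $v=u^*$. By the $p=1$ case of Proposition~\ref{prop:PS}, $|Du^*|(\R^n)\le\|\nabla u\|_{L^1(\M)}$, and the $L^{n/(n-1)}$ norms of $u$ and $u^*$ agree. This gives \eqref{eq:L1-sobolev}.

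A direct alternative avoids the Euclidean theorem. Use the layer-cake bound $\|u\|_{n/(n-1)}\le\int_0^\infty \vol(E_t)^{(n-1)/n}\d t$, which is Minkowski's inequality applied to $|u|=\int_0^\infty\chi_{E_t}\d t$. Then apply \eqref{eq:abstract-iso} to each level set $E_t$ and the coarea formula $\int_\M|\nabla u|=\int_0^\infty\Per(E_t)\d t$.

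For \eqref{eq:p-sobolev} with $1<p<n$, we invoke the Aubin--Talenti theorem: for every $v\in W^{1,p}(\R^n)$,
\[
\|v\|_{L^{p^*}(\R^n)}\le S_{n,p}\|\nabla v\|_{L^p(\R^n)},
\]
with exactly the displayed constant. Taking $v=u^*$ and combining with the two relations above gives
\[
\|u\|_{L^{p^*}(\M)}=\|u^*\|_{L^{p^*}}\le S_{n,p}\|\nabla u^*\|_{L^p}\le S_{n,p}\|\nabla u\|_{L^p(\M)}.
\]

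The only point needing care is that Proposition~\ref{prop:PS} is really available. Its hypothesis \eqref{eq:abstract-iso} is supplied for all finite-perimeter sets by \eqref{eq:numerical-CH}. Its conclusion must hold for the exact $u^*$ that we feed into the Euclidean theorems, including the plateau issue mentioned there. Since $u^*$ is Lipschitz and compactly supported, it lies in $W^{1,p}(\R^n)$, so the Euclidean inequalities apply to it without further approximation. The main obstacle is thus only citing the sharp Euclidean constants correctly; everything else is transfer.
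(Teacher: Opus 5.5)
Your proposal is correct and is the paper's own argument: equimeasurability and Proposition~\ref{prop:PS} transfer both sides to the Euclidean rearrangement $u^*$, and the sharp Federer--Fleming and Aubin--Talenti inequalities on $\R^n$ then give \eqref{eq:L1-sobolev} and \eqref{eq:p-sobolev}. Your layer-cake plus coarea alternative for the $L^1$ case is also valid and avoids rearrangement entirely. Note that in this subsection $\M$ is the abstract complete infinite-volume manifold satisfying \eqref{eq:abstract-iso}, not specifically a symmetric space; your argument covers that setting unchanged, since it uses nothing beyond \eqref{eq:abstract-iso}.
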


\begin{proof}
Apply the sharp Euclidean $L^1$-Sobolev inequality and  the Aubin--Talenti inequality to $u^*$.  Equimeasurability preserves the norm on the left and Proposition \ref{prop:PS} does not increase the gradient norm. The Euclidean constants are due to Federer--Fleming~\cite{FF1960} at the endpoint and to Aubin and Talenti~\cite{Aubin1976,Talenti1976} for $1<p<n$.
\end{proof}

We record another corollary regarding the Faber--Krahn lower bound. No boundedness, smoothness, or connectedness of $\Omega$ is required.

\begin{corollary}
\label{cor:FK}
Let $\Omega\subset \M$ be any open set with $0<\vol(\Omega)<\infty$.  For $1<p<\infty$, define
\[
  \lambda_{1,p}^{D}(\Omega) =
  \inf_{0\ne u\in C_c^\infty(\Omega)}\frac{\int_\Omega|\nabla u|^p\d \vol}{\int_\Omega|u|^p\d \vol}.
\]
If $B_V\subset\R^n$ is a Euclidean ball with $|B_V|=V=\vol(\Omega)$, then
\begin{equation}
  \lambda_{1,p}^{D}(\Omega)\geq\lambda_{1,p}^{D}(B_V) = \lambda_{1,p}^{D}(B_1)  \left(\frac{\omega_n}{V}\right)^{p/n}.
  \label{eq:p-FK}
\end{equation}
\end{corollary}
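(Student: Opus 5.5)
The plan is the classical symmetrization argument. The Euclidean inequality (Faber--Krahn for the $p$-Laplacian, i.e.\ the Euclidean ball minimizes $\lambda^D_{1,p}$ among sets of given volume) is transferred through the P\'olya--Szeg\H{o} transfer, Proposition~\ref{prop:PS}. That proposition only needs the numerical isoperimetric inequality \eqref{eq:abstract-iso}. For the symmetric spaces of interest, \eqref{eq:abstract-iso} is supplied by \eqref{eq:numerical-CH} in Proposition~\ref{prop :symmetric}.

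Fix $0\ne u\in C_c^\infty(\Omega)$ and replace $u$ by $|u|$, which is Lipschitz with the same $|\nabla u|$ almost everywhere. Let $u^*$ be the Euclidean symmetric decreasing rearrangement.

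1. Since $\operatorname{supp}u\subset\Omega$, every superlevel set satisfies $\vol(\{u>t\})\le\vol(\Omega)=V$. Hence $u^*$ is supported in the closed ball $\overline{B_V}$ centered at the origin.
2. Equimeasurability (the $L^q$ identity in Proposition~\ref{prop:PS}) gives $\|u^*\|_{L^p(\R^n)}=\|u\|_{L^p(\M)}$.
3. Proposition~\ref{prop:PS} gives $\int|\nabla u^*|^p\le\int_\Omega|\nabla u|^p$.

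Combining these, the Rayleigh quotient of $u$ on $\Omega$ dominates that of $u^*$ on $B_V$. The function $u^*$ is Lipschitz (rearrangement of a Lipschitz compactly supported function) and vanishes on $\partial B_V$, so $u^*\in W^{1,p}_0(B_V)$. By density of $C_c^\infty(B_V)$ in $W^{1,p}_0(B_V)$, the quotient of $u^*$ is at least $\lambda^D_{1,p}(B_V)$. Taking the infimum over $u$ gives the inequality in \eqref{eq:p-FK}.

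The equality $\lambda_{1,p}^{D}(B_V)=\lambda_{1,p}^{D}(B_1)(\omega_n/V)^{p/n}$ is scaling. $B_V$ has radius $\rho=(V/\omega_n)^{1/n}$, and $v(x)=w(x/\rho)$ multiplies the quotient by $\rho^{-p}$.

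The delicate step is applying Proposition~\ref{prop:PS}, which is stated for $C_c^\infty$ functions, to $|u|$. I would avoid the issue by applying the proposition to $u$ itself, since its proof already begins by replacing $u$ with $|u|$. Alternatively one can smooth $\sqrt{u^2+\varepsilon^2}-\varepsilon$ and let $\varepsilon\to0$ with lower semicontinuity.

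No regularity of $\Omega$ is used, because only test functions compactly supported inside $\Omega$ enter. Unbounded or disconnected $\Omega$ cause no trouble, since only $\vol(\Omega)<\infty$ is used, to place $u^*$ inside $B_V$.
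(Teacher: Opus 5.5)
Your proposal is correct and follows the paper's argument. You apply Proposition~\ref{prop:PS} to $u$, use equimeasurability and $\vol(\{u>t\})\le V$ to place $u^*$ in $W^{1,p}_0(B_V)$, compare with $\lambda^D_{1,p}(B_V)$, and finish by Euclidean scaling. One small point: the Lipschitz property of $u^*$ is not the usual Euclidean nonexpansivity of rearrangement, since $u$ lives on $\M$. It follows here from the parallel-set bound \eqref{eq:euclidean-parallel}, because an $L$-Lipschitz $u$ satisfies $\{u\ge t\}_r\subseteq\{u\ge t-Lr\}$; the paper simply asserts that $u^*\in W^{1,p}_0(B_V)$.
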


\begin{proof}
For $u\in C_c^\infty(\Omega)$, Proposition \ref{prop:PS} yields
\[
  \frac{\int_\Omega|\nabla u|^p\d\vol}
       {\int_\Omega|u|^p\d\vol}
  \ge
  \frac{\int_{\R^n}|\nabla u^*|^p\d x}
       {\int_{\R^n}|u^*|^p\d x}.
\]
The support of $u^*$ is contained in a Euclidean ball of volume at most $V$. Extending by zero, regard $u^*$ as an element of $W^{1,p}_0(B_V)$.  Its Rayleigh quotient is at least $\lambda_{1,p}^{D}(B_V)$. Taking the infimum over $u$ proves \eqref{eq:p-FK}. The Euclidean scaling law gives the equality on the right, and the radial eigenfunction of a Euclidean ball gives
\eqref{eq:FK} as desired.
\end{proof}

For $p=2$, Corollary~\ref{cor:FK} implies
\begin{equation}
  \lambda_1^{D}(\Omega) \geq j_{n/2-1,1}^{\,2}  \left(\frac{\omega_n}{\vol(\Omega)}\right)^{2/n},
  \label{eq:FK}
\end{equation}
where $j_{\nu,1}$ is the first positive zero of $J_\nu$. On the other hand, Corollary \ref{cor:FK} does not say that a geodesic ball in $M$ minimizes $\lambda_1$ among domains of fixed volume.

Let us finally remark that the constants in \eqref{eq:L1-sobolev}, \eqref{eq:p-sobolev}, and \eqref{eq:p-FK} cannot be improved as homogeneous constants on $\M$. This statement does not assert that an optimizer exists.

\appendix
\section{The contact parabolic classification and dimension computations}\label{app:classification}

In this section, we gather the Lie algebra computations used in Proposition~\ref{prop:contact-factorization}. All of them come from the contact grading determined by the first Harish--Chandra strongly orthogonal root.  The root space formulas that we need are proved in~\cite[Sections~1.2--2.1]{ZhangHeisenberg}.
Throughout, all entries below are statements about Lie algebras.  

At the Lie algebra level, let $\mathfrak k$ be a
maximal compactly embedded subalgebra and set
$ \mathfrak c=[\mathfrak k,\mathfrak k]$. 
Let $\mathfrak p$ be the contact parabolic and write
$\mathfrak h=\mathfrak c\cap\mathfrak p.$ 
We now go through the Hermitian list type by type.

Let $\overline{\mathfrak u}$ denote the nilradical of the parabolic opposite to $\mathfrak p$. The contact grading gives the vector space decomposition
\[
  \mathfrak g=\overline{\mathfrak u}\oplus\mathfrak p.
\]
Consequently,
$
  \dim(G/P) =\dim(\mathfrak g/\mathfrak p) =\dim\overline{\mathfrak u}.
$
It therefore remains to verify, type by type, that
\[
  \dim\mathfrak c-\dim\mathfrak h
  =\dim(\mathfrak g/\mathfrak p).
\]
We use the conventions
$
  \mathfrak u(0)=\mathfrak{su}(1)=\mathfrak{so}(1)=0.
$

\subsection*{Type AIII}
Let $\mathfrak g=\mathfrak{su}(p,q)$ with $1\le p\le q$ and
$(p,q)\ne(1,1)$.  The maximal compactly embedded subalgebra is
$\mathfrak k=\mathfrak s\bigl(\mathfrak u(p)\oplus\mathfrak u(q)\bigr)$, so that
\[
 \mathfrak c=\mathfrak{su}(p)\oplus\mathfrak{su}(q).
\]
From $\dim\mathfrak{su}(k)=k^2-1$ we get $\dim\mathfrak c=p^2+q^2-2$, and with $\dim\mathfrak g=(p+q)^2-1$ the noncompact Lie dimension is
\[
 \ndim(G)=\dim\mathfrak g-\dim\mathfrak c
 =\bigl((p+q)^2-1\bigr)-\bigl(p^2+q^2-2\bigr)=2pq+1 .
\]
At the Lie algebra level, $\mathfrak h=\mathfrak c\cap\mathfrak p$ is the codimension one subalgebra of
$\mathfrak u(p-1)\oplus\mathfrak u(q-1)$ cut out, for a suitable embedding, by equality of the two trace coordinates, so that
$\dim\mathfrak h=(p-1)^2+(q-1)^2-1$.  Hence
\begin{align*}
\dim\mathfrak c-\dim\mathfrak h
 &=\bigl(p^2+q^2-2\bigr)-\bigl((p-1)^2+(q-1)^2-1\bigr)\\
 &=2(p+q)-3=\dim(\mathfrak g/\mathfrak p) .
\end{align*}

\subsection*{Type CI}
Let $\mathfrak g=\mathfrak{sp}_{2n}(\R)$ with $n\ge2$, so that $\dim\mathfrak g=n(2n+1)$ and $\mathfrak k=\mathfrak u(n)$.  Then
\[
 \mathfrak c=\mathfrak{su}(n),\qquad
 \mathfrak h=\mathfrak{su}(n-1).
\]
Using $\dim\mathfrak{su}(k)=k^2-1$, we have
\begin{align*}
\dim\mathfrak c-\dim\mathfrak h&=(n^2-1)-\bigl((n-1)^2-1\bigr)=2n-1=\dim(\mathfrak g/\mathfrak p),\\
 \ndim(G)&=n(2n+1)-(n^2-1)=n^2+n+1 .
\end{align*}

\subsection*{Type DIII}
Let $\mathfrak g=\mathfrak{so}^*(2n)$ with $n\ge3$, so that
$\dim\mathfrak g=n(2n-1)$ and $\mathfrak k=\mathfrak u(n)$.  Then
\[
 \mathfrak c=\mathfrak{su}(n),\qquad
 \mathfrak h=\mathfrak{su}(2)\oplus\mathfrak{su}(n-2).
\]
Then $\dim\mathfrak h=3+\bigl((n-2)^2-1\bigr)=n^2-4n+6$, and we have
\begin{align*}
 \dim\mathfrak c-\dim\mathfrak h&=(n^2-1)-(n^2-4n+6)=4n-7=\dim(\mathfrak g/\mathfrak p),\\
 \ndim(G)&=n(2n-1)-(n^2-1)=n^2-n+1 .
\end{align*}

\subsection*{Type BDI}
Let $\mathfrak g=\mathfrak{so}(2,n)$ with $n\ge3$, so that $\dim\mathfrak g=\tfrac12(n+2)(n+1)$.  Here the relevant parabolic is the stabilizer of a maximal isotropic two plane, as the isotropic line parabolic does not produce the required dimension identity. For the contact parabolic,
\[
 \mathfrak c=\mathfrak{so}(n),\qquad
 \mathfrak h=\mathfrak{so}(n-2).
\]
From $\dim\mathfrak{so}(k)=\tfrac12 k(k-1)$, we have
\begin{align*}
 \dim\mathfrak c-\dim\mathfrak h
 &=\frac{n(n-1)-(n-2)(n-3)}2=2n-3=\dim(\mathfrak g/\mathfrak p),\\
 \ndim(G)
 &=\frac{(n+2)(n+1)-n(n-1)}2=2n+1 .
\end{align*}

\subsection*{Types EIII and EVII}
For $\mathfrak g=\mathfrak e_{6(-14)}$ one has $\dim\mathfrak g=78$ and
\[
 \mathfrak c=\mathfrak{so}(10),\quad
 \mathfrak h=\mathfrak{su}(5).
\]
With $\dim\mathfrak{so}(10)=45$ and $\dim\mathfrak{su}(5)=24$, we have
\[
\dim\mathfrak c-\dim\mathfrak h=21=\dim(\mathfrak g/\mathfrak p),\qquad
 \ndim(G)=33.
\]

For $\mathfrak g=\mathfrak e_{7(-25)}$ one has $\dim\mathfrak g=133$ and
\[
 \mathfrak c=\mathfrak e_{6(-78)},\quad
 \mathfrak h=\mathfrak{so}(10).
\]
As $\dim\mathfrak e_6=78$ and $\dim\mathfrak{so}(10)=45$, we have
\[
\dim\mathfrak c-\dim\mathfrak h=33=\dim(\mathfrak g/\mathfrak p),\qquad \ndim(G)=55.
\]

Hence, in every type we have the equality
\[
 \dim\mathfrak c-\dim\mathfrak h
  =\dim(\mathfrak g/\mathfrak p).
\]

The exceptional case $\mathfrak g\simeq\mathfrak{sl}_2(\mathbb R)$ was treated separately in Theorem~\ref{thm: PSL2}. All remaining Hermitian cases the preceding dimension calculations establish the contact factorization.

\bibliographystyle{amsalpha}
\bibliography{reference}

\end{document}